\documentclass{amsart}
\calclayout
 \usepackage{amssymb,amsmath,amsfonts,epsfig,latexsym,tikz}
   \usepackage[alphabetic]{amsrefs}
 \usepackage{tikz-cd}
\usepackage{hyperref}
\usepackage{enumerate}
\usepackage{mathtools}
\usepackage{verbatim}
\usepackage{cleveref}
\usepackage[shortlabels]{enumitem}
\usepackage{subcaption}

\usetikzlibrary{positioning}
\usetikzlibrary{matrix}
\usetikzlibrary{decorations}
\usetikzlibrary{decorations.pathreplacing, decorations.pathmorphing, angles,quotes}
 
 \newtheorem{theorem}{Theorem}[section]

\newtheorem{proposition}[theorem]{Proposition}
\newtheorem{lemma}[theorem]{Lemma}
\newtheorem{corollary}[theorem]{Corollary}

\theoremstyle{definition}
\newtheorem{remark}[theorem]{Remark}
\newtheorem{example}[theorem]{Example}

\begin{document}

\title{$F$-rationality of multiplicity-free subvarieties}          
    
\author{Matt Larson}
\date{\today}
\begin{abstract}
A multiplicity-free subvariety of a product of projective lines is a subvariety with the property that the expansion of its Chow class in the usual basis has all coefficients equal to $0$ or $1$. We show that, over a field of positive characteristic, a multiplicity-free subvariety has $F$-rational singularities. This strengthens Brion's result that multiplicity-free subvarieties are normal and Cohen--Macaulay, and that they have rational singularities over a field of characteristic $0$. 
\end{abstract}
 
\maketitle

\vspace{-20 pt}

\section{Introduction}

Let $k$ be a field, and let $Y$ be an (integral) subvariety of $(\mathbb{P}^1)^n$ over $k$. There is a canonical basis for the Chow homology of $(\mathbb{P}^1)^n$ given by the classes of coordinate subspaces. We say that $Y$ is \emph{multiplicity-free} if the expansion of the fundamental class of $Y$ in this basis has all coefficients equal to $0$ or $1$. 
There are a number of natural examples of multiplicity-free subvarieties; see Section~\ref{sec:examples}.

By Bertini's theorem, whether $Y$ is multiplicity-free is determined by the intersection of $Y$ with a dense open subset of $(\mathbb{P}^1)^n$. This makes it perhaps surprising that the singularities of multiplicity-free varieties are mild, as the following result of Brion shows. 

\begin{proposition}\cites{BrionOrbitClosure,BrionMultiplicity}\label{prop:CM}
Let $Y$ be a multiplicity-free integral subvariety of $(\mathbb{P}^1)^n$ over a field $k$. Then $Y$ is normal and Cohen--Macaulay. If $k$ has characteristic $0$, then $Y$ has rational singularities. 
\end{proposition}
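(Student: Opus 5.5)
Brion's result states that multiplicity-free subvarieties are normal and Cohen--Macaulay, with rational singularities in characteristic $0$. My plan is to follow Brion's degeneration strategy. The idea is to degenerate $Y$ flatly to a reduced union of coordinate subspaces, show that this degeneration has good properties, and then transfer those properties back to $Y$ by semicontinuity.

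\textbf{Step 1: degeneration.} Consider the torus $\mathbb{G}_m^n$ acting on $(\mathbb{P}^1)^n$. Choose a generic one-parameter subgroup $\lambda$ and let $Y_0 = \lim_{t\to 0}\lambda(t)\cdot Y$ be the flat limit. Then $Y_0$ is a torus-invariant cycle with the same Chow class as $Y$. Since every torus-invariant irreducible subvariety that can arise this way is a coordinate subspace $\prod_i Z_i$ with each $Z_i\in\{\mathbb{P}^1,0,\infty\}$, the multiplicity-free hypothesis gives two things. First, $Y_0$ is generically reduced. Second, $Y_0$ is a union of distinct coordinate subspaces, each appearing with multiplicity one. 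I would then argue that $Y_0$ is in fact reduced, by showing it has no embedded components; this uses the fact that the flat limit of an integral scheme under a torus degeneration is $S_1$-controlled by the Hilbert polynomial. Alternatively, one can compare Hilbert polynomials of $Y_0$ and of $(Y_0)_{\mathrm{red}}$.

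\textbf{Step 2: $Y_0$ is Cohen--Macaulay with the right cohomology.} A reduced union of coordinate subspaces of $(\mathbb{P}^1)^n$ is locally a Stanley--Reisner scheme. I would check that it is Cohen--Macaulay, and also that $H^i(Y_0, \mathcal{O}(a))=0$ for $i>0$ and $a\ge 0$ multidegrees. The key combinatorial input is that the set of coordinate subspaces occurring in a multiplicity-free class forms a shellable complex. To see this, use the recursion obtained by intersecting with a coordinate hyperplane, or induct on $n$ via the projection $(\mathbb{P}^1)^n\to(\mathbb{P}^1)^{n-1}$, whose fibers over $Y$'s image are points or lines by the multiplicity-freeness.

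\textbf{Step 3: transfer back to $Y$.} By upper semicontinuity in the flat family, Cohen--Macaulayness of the special fiber implies it for the general fiber $Y$. For normality, $Y$ is $S_2$ because it is Cohen--Macaulay. It is $R_1$ because the special fiber $Y_0$ is reduced and generically smooth along the codimension-one strata, and an argument in the style of Hironaka gives normality. For rational singularities in characteristic $0$, I would use the Kempf criterion: if $Y$ is Cohen--Macaulay and normal, $Y$ has rational singularities if $\pi_*\omega_{\tilde Y}=\omega_Y$ for a resolution $\pi$. Alternatively, I would use Brion's approach, showing that $Y$ is a degeneration of something with rational singularities, or applying the theorem that a Cohen--Macaulay variety whose general deformation data has vanishing higher cohomology of $\mathcal{O}$ on the resolution has rational singularities.

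The main obstacle is Step 2, specifically establishing shellability or Cohen--Macaulayness of the limiting union of coordinate subspaces. This is where the multiplicity-free hypothesis is really used beyond reducedness. The rational-singularity step is also delicate, and I would rely on Brion's original argument via a resolution built from iterated $\mathbb{P}^1$-bundles.
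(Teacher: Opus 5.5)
The paper does not reprove this proposition. It quotes it from Brion, and Remarks~\ref{rem:normal} and~\ref{rem:rational} indicate that the relevant argument is not a degeneration. It is the same induction on $n$ used for Theorem~\ref{thm:main}, driven by Proposition~\ref{prop:image}, Lemma~\ref{lem:push} and Lemma~\ref{lem:pushvanish}. Your plan has genuine gaps, starting with the reducedness of $Y_0$ in Step~1.

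Multiplicity-freeness only tells you that $Y_0$ is generically reduced, with components of distinct classes. Embedded components are invisible to the Chow class, and no principle makes a flat limit of an integral scheme ``$S_1$-controlled by the Hilbert polynomial''. Flat limits of integral schemes routinely acquire embedded points; e.g.\ twisted cubics can specialize to a plane nodal cubic with an embedded point at the node. Comparing Hilbert polynomials of $Y_0$ and $(Y_0)_{\mathrm{red}}$ would require $\chi(\mathcal{O}_Y(a))=\chi(\mathcal{O}_{(Y_0)_{\mathrm{red}}}(a))$ for all $a$, i.e.\ that the $K$-class of $\mathcal{O}_Y$ is the one dictated by its Chow class. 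That cannot be read off from the Chow class. It is the $K$-theoretic half of Brion's theorem (the ``kindred'' property recalled in Section~\ref{sec:examples}), so invoking it begs the question.

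Step~2 then rests on a shellability claim that is only asserted. For a generic one-parameter subgroup of the torus the limit is merely $T$-fixed, its components mix $0$ and $\infty$ coordinates, and neither suggested induction is carried out. (For a Borel-fixed generic initial degeneration, the reduced limit would be $\bigcup_S\{x_i=0,\ i\in S\}$ over the $S$ occurring in $[Y]$. That is a matroid complex and hence shellable, but it still presupposes reducedness.)

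Step~3 also fails as stated. The Cohen--Macaulay transfer is fine, but normality does not follow. As soon as $Y_0$ is reducible, two components meet in codimension one, since a connected Cohen--Macaulay scheme is connected in codimension one; already for the diagonal of $(\mathbb{P}^1)^2$ the limit is two lines meeting at a point. So $Y_0$ is neither smooth nor $R_1$ along those strata, your premise is false, and Hironaka-type openness of normality needs a normal special fiber. In fact a reduced Cohen--Macaulay limit that is a union of linear spaces forces nothing: in $\mathbb{P}^2$ the family $xyz+t(x^3+y^3)=0$ degenerates a nodal cubic to the triangle $xyz=0$.

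The rational-singularities paragraph only restates Kempf's criterion. The idea that ``$Y$ is a degeneration of something with rational singularities'' points the wrong way: Elkik's theorem passes rationality from a special fiber to nearby fibers, and your special fiber is not even normal. Falling back on Brion's argument is just the citation.

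What works is the inductive route. If some $p_i|_Y$ is not birational then $Y\cong p_i(Y)\times\mathbb{P}^1$, so assume every $p_i|_Y$ is birational.
\begin{itemize}
\item For normality, apply $p_{i*}$ to $0\to\mathcal{O}_Y\to\nu_*\mathcal{O}_{Y^\nu}\to Q\to 0$. Since $p_i(Y)$ is normal by induction, both pushforwards are $\mathcal{O}_{p_i(Y)}$, and $R^1p_{i*}\mathcal{O}_Y=0$. Hence $p_{i*}Q=0$ for all $i$, and $Q=0$ by Lemma~\ref{lem:pushvanish}.
\item For Cohen--Macaulayness, duality and Lemma~\ref{lem:push} give $Rp_{i*}\omega_Y^\bullet\cong\omega_{p_i(Y)}^\bullet$, which by induction is a single sheaf in degree $-\dim Y$. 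The two-column spectral sequence then yields $p_{i*}\mathcal{H}^m(\omega_Y^\bullet)=0$ for $m>-\dim Y$, so these sheaves vanish by Lemma~\ref{lem:pushvanish}.
\item In characteristic $0$, rationality is the proof of Theorem~\ref{thm:main}, with $\pi_*\omega_X$ for a resolution $\pi$ in place of the test module and Grauert--Riemenschneider vanishing in place of \cite[Lemma 5.2]{BST}.
\end{itemize}
No flat limit of $Y$ is needed anywhere.
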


This result has played a crucial role in work of Berget and Fink \cites{BergetFink,BF24} and Liu \cite{LiuVanishing}. Related results also played an important role in \cite{EFL}. 
A different proof of the normality and Cohen--Macaulayness was given in \cite[Section 6]{CCRC}. Proposition~\ref{prop:CM} can be used to recover several results from the literature \cite{BWMW} \cite[Corollary 2.3]{Sarkar} \cite[Corollary 4.1]{BM}; see Section~\ref{sec:examples}. 

We study the $F$-singularities of multiplicity-free varieties over a field of positive characteristic. These are classes of singularities which are defined in terms of the Frobenius morphism. We will be particularly concerned with \emph{$F$-rational singularities}. This class of singularities was originally defined in terms of tight closure \cites{HH90,HH94,FedderWatanabe}, and it can also be defined in terms of the action of Frobenius on local cohomology \cite{Smith94}.

We will use a characterization of $F$-rationality given in \cite[Corollary 3.6]{BST}. An (integral) variety $Y$ over an algebraically closed field has $F$-rational singularities if it is Cohen--Macaulay, and, for any alteration $\pi \colon X \to Y$, the pushforward map $\pi_*\omega_X \to \omega_Y$ is surjective. Here $\omega_X$ and $\omega_Y$ are the canonical sheaves of $X$ and $Y$. We say that a variety $Y$ over a field $k$ has geometrically $F$-rational singularities if its base change to an algebraically closed field has $F$-rational singularities; this implies that $Y$ has $F$-rational singularities in the usual sense. See \cite[Proposition A.5]{DattMurayama} and \cite[(6) on pg. 440]{Velez}. 

\begin{theorem}\label{thm:main}
Let $Y$ be a multiplicity-free integral subvariety of $(\mathbb{P}^1)^n$ over a field $k$ of positive characteristic. Then $Y$ has geometrically $F$-rational singularities. 
\end{theorem}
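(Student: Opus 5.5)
We reduce to $k$ algebraically closed, since multiplicity-freeness is preserved under base change: the Chow class does not change. Proposition~\ref{prop:CM} then gives that $Y$ is Cohen--Macaulay. By the criterion of \cite[Corollary 3.6]{BST}, it remains to show that for every alteration $\pi\colon X\to Y$ the trace map $\pi_*\omega_X\to\omega_Y$ is surjective.

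The strategy is to imitate Brion's degeneration argument, with surjectivity of traces as the property being propagated. We induct on $n$ and on $\dim Y$. First consider a coordinate projection $p\colon (\mathbb{P}^1)^n\to(\mathbb{P}^1)^{n-1}$. If $p|_Y$ is generically finite, multiplicity-freeness forces it to be birational onto its image $Y'$, and $Y'$ is again multiplicity-free. If instead $p|_Y$ has one-dimensional fibers, then $Y\subseteq p^{-1}(Y')$, and $Y$ is either equal to this $\mathbb{P}^1$-bundle or is a divisor in it. The bundle case is straightforward, since a $\mathbb{P}^1$-bundle over an $F$-rational variety is $F$-rational. The interesting case is the divisor case, where $Y$ is a divisor in $Y'\times\mathbb{P}^1$ and $p|_Y$ is birational.

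In that case the plan is to relate $Y$ to simpler multiplicity-free varieties via a Borel-type degeneration. We use the $\mathbb{G}_m$-action (or a $B\subset PGL_2$ action) on the last factor and flatly degenerate $Y$ to a limit $Y_0=\bigcup_i Y_i$. Each component $Y_i$ is multiplicity-free of the same dimension, for example $Y'\times\{0\}$ together with $p^{-1}(Z)$ for suitable $Z\subset Y'$. Multiplicity-freeness makes this limit reduced, and its components meet along multiplicity-free subvarieties.

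Surjectivity of the trace is then proved in three steps.
\begin{enumerate}
\item Using the Cohen--Macaulay property and the short exact sequences for $\omega$ of a union, show that $Y_0$ has the property "trace from alterations is surjective" whenever its components and their intersections do. The components and intersections have this property by induction.
\item Deform this property from $Y_0$ to the general fiber. Here I would use a semicontinuity/deformation statement for $F$-rationality, or more concretely for the cokernel of $\pi_*\omega_X\to\omega_Y$ in families. $F$-rationality deforms, by Hochster--Huneke, and that result applies to a Cartier divisor $Y_0$ in the total space of the family.
\item Handle the fact that $Y_0$ is reducible. Here I would instead use the weaker condition of $F$-injectivity plus a trace statement, or replace $Y_0$ by a single component using birationality of $p|_Y$.
\end{enumerate}

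An alternative route avoids degenerations. Multiplicity-free subvarieties are, after Brion, closely related to $B$-orbit closures in spherical varieties, for which $F$-rationality (indeed global $F$-regularity) is often established via Frobenius splitting. So I would first check whether $Y$ admits a Frobenius splitting compatible with a birational resolution by a tower of $\mathbb{P}^1$-bundles (a Bott--Samelson-type resolution $\tilde Y\to Y$), built inductively from the projections above. If $\tilde Y\to Y$ has $R\pi_*\mathcal{O}_{\tilde Y}=\mathcal{O}_Y$ and $\pi_*\omega_{\tilde Y}=\omega_Y$, and $\tilde Y$ is smooth, surjectivity for arbitrary alterations follows. It suffices to show that $\omega_Y$ is the image of traces from $\tilde Y$, and then to use that $\tilde Y$ is smooth and so has $F$-rational singularities. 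The remaining issue is that this requires traces from alterations of $\tilde Y$ to surject onto $\omega_{\tilde Y}$, which holds because smooth varieties are $F$-rational.

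The main obstacle is constructing such a resolution. Concretely, one must show that in the divisor case $Y$ is dominated by a $\mathbb{P}^1$-bundle over a multiplicity-free variety in a birational way, with the required vanishing $R^i\pi_*\mathcal{O}=0$ in positive characteristic. Brion's characteristic-$0$ argument uses Kawamata--Viehweg-type vanishing, which is unavailable here. To replace it, I would try to prove the vanishing by hand using the Cohen--Macaulayness from Proposition~\ref{prop:CM} together with explicit cohomology of $\mathbb{P}^1$-bundles.
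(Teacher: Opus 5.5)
Neither of your routes proves the theorem. Each has a step that fails, and the ingredient that makes the positive-characteristic argument work is missing from both.

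\emph{Degeneration route.} The gap is not confined to step 3. In the interesting case your special fiber $Y_0$ is connected and reducible: for example, $Y'\times\{0\}$ meets $Z\times\mathbb{P}^1$ along $Z\times\{0\}$. So $Y_0$ is not normal, and hence not $F$-rational.
\begin{enumerate}
\item Step 1 cannot hold as stated. The alteration criterion of \cite[Corollary 3.6]{BST} is formulated for integral varieties.
\item Step 2 has no input, because the Hochster--Huneke deformation theorem needs an $F$-rational special fiber.
\item What does pass from such a $Y_0$ to nearby fibers is too weak. Cohen--Macaulayness, or $F$-injectivity via Fedder, does not give $F$-rationality: the cone over an ordinary elliptic curve is normal, Cohen--Macaulay and $F$-pure but not $F$-rational.
\item Replacing $Y_0$ by a single component discards exactly the information about $Y$.
\end{enumerate}

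\emph{Resolution route.} This route rests on a false implication. Grant a smooth $\tilde Y\to Y$ with $R\pi_*\mathcal{O}_{\tilde Y}=\mathcal{O}_Y$ and $\pi_*\omega_{\tilde Y}=\omega_Y$. Nothing in the multiplicity-free condition produces such a $\mathbb{P}^1$-tower once every $p_i|_Y$ is birational, but even so, $F$-rationality of $Y$ does not follow. The rational double point $z^2+x^3+y^5=0$ in characteristic $2$ has a crepant rational resolution, yet it is not even $F$-pure by Fedder's criterion. The step you pass over is this: for an alteration $\rho\colon X'\to\tilde Y$, the surjection $\rho_*\omega_{X'}\to\omega_{\tilde Y}$ does not survive $\pi_*$. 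The obstruction lives in $R^1\pi_*$ of its kernel, and no vanishing proved by hand on $\tilde Y$ controls it.

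\emph{What the paper does instead.} The paper uses no resolution and no degeneration.
\begin{enumerate}
\item Take one alteration with $\mathrm{im}(\pi_*\omega_X\to\omega_Y)=\tau(\omega_Y)$, by \cite[Theorem 3.2]{BST}, and set $C=\omega_Y/\tau(\omega_Y)$.
\item By Lemma~\ref{lem:pushvanish}, it suffices to show $p_{i*}C=0$ for every coordinate projection $p_i$.
\item Induction on $n$ and Corollary~\ref{prop:pushomega} give $p_{i*}C\cong R^1p_{i*}\tau(\omega_Y)$, which is a quotient of $R^1p_{i*}\pi_*\omega_X$.
\item The Bhatt-type vanishing \cite[Lemma 5.2]{BST}, with the two-column Leray spectral sequence, shows that this quotient map becomes zero after passing to a further alteration.
\end{enumerate}
That vanishing up to alterations is the positive-characteristic substitute for Grauert--Riemenschneider in Brion's characteristic-zero argument. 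It is what your proposal lacks.

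A smaller point: your base change step needs geometric integrality (Proposition~\ref{prop:geomintegral}), not only invariance of the Chow class.
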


If $Y$ is a variety over a field of characteristic $0$, then it is known that $Y$ has rational singularities if and only if, when $Y$ is spread out to mixed characteristic, there is a dense subset of the base over which the fibers have $F$-rational singularities \cite{SmithRational,Hara}.  In particular, Theorem~\ref{thm:main} is a strengthening of Proposition~\ref{prop:CM}, as we can spread out $Y$ together with its realization as a multiplicity-free subvariety. In Section~\ref{sec:examples}, we give examples which show that multiplicity-free varieties need not have strongly $F$-regular or even $F$-pure singularities. 

As we describe in Section~\ref{sec:examples}, Theorem~\ref{thm:main} can be used to recover results of \cite{BW}, \cite{Fsingsquarefree}, and \cite{BDSW}. 

The proof of Theorem~\ref{thm:main} is somewhat similar to Brion's proof of Proposition~\ref{prop:CM}, see Remark~\ref{rem:rational}, although there are significant differences. One key tool that we use is a vanishing theorem  due to Blickle, Schwede, and Tucker \cite[Section 5]{BST}, which was inspired by a result of Bhatt \cite[Theorem 0.5]{Bhatt}.

\medskip

In \cites{BrionOrbitClosure,BrionMultiplicity}, Brion works in a slightly more general setting: he considers a (generalized) flag variety, given as the quotient of a reductive group $G$ by a reduced parabolic $P$. Then the Chow homology of $G/P$ is equipped with a Schubert basis, and he considers subvarieties $Y$ of $G/P$ with the property that the expansion of the fundamental class of $Y$ in this basis has all coefficients equal to $0$ or $1$. If $G = (GL_2)^n$ and $P$ is a Borel subgroup, then $G/P = (\mathbb{P}^1)^n$ and this is the notion described above. 
Brion proves Proposition~\ref{prop:CM} in this level of generality, and our proof of Theorem~\ref{thm:main} works in this level of generality. See Section~\ref{ssec:GmodP}. 

We choose to focus on the case of $(\mathbb{P}^1)^n$ for clarity of presentation, and also because that is where most of the applications are. As shown in Proposition~\ref{prop:product}, a multiplicity-free subvariety of a product of projective spaces can be covered by open subsets which are isomorphic to open subsets of multiplicity-free subvarieties of a product of projective lines, so Theorem~\ref{thm:main} applies directly to this case. For more general $G/P$, there are not very many interesting examples of multiplicity-free subvarieties. The most obvious examples are Schubert varieties, but it is known that these satisfy the stronger property of being strongly $F$-regular \cite{SchubertFregular}. In \cite{BrionOrbitClosure}, Brion shows that, for certain subgroups $H$ of $G$, $H$-orbit closures in $G/P$ are multiplicity-free; see also \cites{Ressayre,BenderPerrin}. In \cites{HeThomsen}, He and Thomsen show that these $H$-orbit closures are often strongly $F$-regular.

\subsection*{Acknowledgements}
We thank Karen Smith for asking us about $F$-singularities of multiplicity-free varieties. 
We thank Bhargav Bhatt and Linquan Ma for answering our questions about $F$-singularities. We thank Colin Crowley, Anna Brosowsky, and Connor Simpson for helpful discussions, especially about Example~\ref{ex:schubert}. 
Claude Opus 5 was used for proofreading, but AI had no role in the content or writing of this paper. 
This work was conducted while the author was at the Institute for Advanced Study, where he is supported by the Charles Simonyi Endowment and the Oswald Veblen Fund.

\section{Examples of multiplicity-free varieties}\label{sec:examples}

In this section, we give several examples of multiplicity-free subvarieties of a product of projective lines. We begin by proving some basic results about multiplicity-free varieties.

\subsection{Tools}

The first three results in this section are needed in the proof of Theorem~\ref{thm:main}.

\begin{proposition}\label{prop:projectionproduct}
Let $Y$ be a subvariety of $(\mathbb{P}^1)^n$, and let $p \colon (\mathbb{P}^1)^n \to (\mathbb{P}^1)^{n-1}$ be a coordinate projection. If the restriction of $p$ to $Y$ is not generically finite, then the natural map from $Y$ to $p(Y) \times \mathbb{P}^1$ is an isomorphism. 
\end{proposition}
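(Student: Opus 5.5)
We argue by dimension: show that $Y$ is contained in $p(Y)\times\mathbb{P}^1$, that the two have the same dimension, and that the target is irreducible, so they coincide as reduced closed subvarieties. Here $Y$ is integral, so ``natural map'' means the closed immersion $Y \hookrightarrow p^{-1}(p(Y)) = p(Y)\times\mathbb{P}^1$, where the last coordinate is the one forgotten by $p$.

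Containment is immediate: $Y \subseteq p^{-1}(p(Y))$, and $p^{-1}(p(Y)) = p(Y)\times \mathbb{P}^1$ because $p$ is the projection of the product $(\mathbb{P}^1)^{n-1}\times\mathbb{P}^1$ onto its first factor. Moreover $p(Y)$ is closed, since $p$ is proper, and it is integral as the image of an integral variety. Hence $p(Y)\times\mathbb{P}^1$ is an integral closed subvariety of dimension $\dim p(Y)+1$. Over a non-algebraically-closed $k$ a product of integral varieties need not be integral, but a product with $\mathbb{P}^1$ is: it is covered by two copies of $p(Y)\times\mathbb{A}^1$, which are integral since $R[t]$ is a domain when $R$ is, and these open sets have nonempty intersection.

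For the dimension, apply the theorem on the dimension of fibers to the surjection $p|_Y\colon Y\to p(Y)$ of integral varieties. Since $p|_Y$ is not generically finite, the generic fiber has positive dimension, so $\dim Y = \dim p(Y) + \dim(\text{generic fiber}) \ge \dim p(Y)+1$. On the other hand, every fiber of $p|_Y$ is a closed subscheme of $\{y\}\times\mathbb{P}^1$, so it has dimension at most $1$. Therefore $\dim Y = \dim p(Y)+1$.

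We now have a closed subvariety $Y$ of the integral variety $p(Y)\times\mathbb{P}^1$ of the same dimension. It follows that $Y = p(Y)\times\mathbb{P}^1$ as closed subsets, and since both are reduced, the closed immersion is an isomorphism. There is no real obstacle in this argument; the only points needing care are the integrality of $p(Y)\times\mathbb{P}^1$ over an arbitrary field and the correct reading of ``not generically finite'' as ``positive-dimensional generic fiber,'' both of which are handled above.
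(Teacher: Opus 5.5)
Your proposal is correct and follows the same route as the paper: you show $Y \subseteq p^{-1}(p(Y)) = p(Y)\times\mathbb{P}^1$, use non-generic-finiteness to get $\dim p(Y) = \dim Y - 1$, and conclude equality because the target is integral of the same dimension. You simply supply details the paper leaves implicit, namely the integrality of $p(Y)\times\mathbb{P}^1$ over a non-closed field and the fiber-dimension bound.
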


Here the map $Y \to \mathbb{P}^1$ is the map to the factor which is forgotten by $p$. 

\begin{proof}
Note that $Y$ is contained in $p^{-1}(p(Y))$. Furthermore, the dimension of $p(Y)$ is $\dim Y - 1$ because the restriction of $p$ to $Y$ is not generically finite, so $Y$ has the same dimension as $p^{-1}(p(Y))$. The result follows because $Y$ is integral. 
\end{proof}

\begin{proposition}\label{prop:image}
Let $Y$ be a multiplicity-free subvariety of $(\mathbb{P}^1)^n$. Then the image of $Y$ under a coordinate projection is multiplicity-free. 
\end{proposition}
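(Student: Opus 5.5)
It suffices to treat a single coordinate projection $p \colon (\mathbb{P}^1)^n \to (\mathbb{P}^1)^{n-1}$, forgetting the last factor; the general case then follows by induction, since a coordinate projection is a composition of such maps. Write the class of $Y$ as $[Y] = \sum_{S} c_S [\mathbb{P}^S]$, where $S$ ranges over subsets of $\{1,\dots,n\}$ with $|S| = \dim Y$, $\mathbb{P}^S$ denotes the coordinate subspace that is $\mathbb{P}^1$ in the factors indexed by $S$ and a point in the others, and $c_S \in \{0,1\}$. There are two cases, according to whether $p|_Y$ is generically finite.

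\textbf{Case 1: $p|_Y$ is not generically finite.} By Proposition~\ref{prop:projectionproduct}, $Y \cong p(Y) \times \mathbb{P}^1$ as subvarieties, so $[Y] = [p(Y)] \times [\mathbb{P}^1]$. Write $[p(Y)] = \sum_T d_T [\mathbb{P}^T]$ with $T \subseteq \{1,\dots,n-1\}$. Then $[Y] = \sum_T d_T [\mathbb{P}^{T \cup \{n\}}]$, and these are distinct basis elements. Hence each $d_T$ equals some $c_S$, and so lies in $\{0,1\}$.

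\textbf{Case 2: $p|_Y$ is generically finite of degree $\delta \geq 1$.} Then $p_*[Y] = \delta [p(Y)]$. Pushforward on basis classes is easy to compute: $p_*[\mathbb{P}^S] = [\mathbb{P}^S]$ (viewed in $(\mathbb{P}^1)^{n-1}$) if $n \notin S$, and $p_*[\mathbb{P}^S] = 0$ if $n \in S$, since then the fibres are positive-dimensional. Therefore
\[
\delta [p(Y)] = \sum_{S \not\ni n} c_S [\mathbb{P}^S].
\]
The coefficients on the right lie in $\{0,1\}$, and $[p(Y)]$ is a nonzero effective class, so it has nonnegative integer coefficients, not all zero. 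If $\delta$ divides every coefficient on the right, at least one of which is $1$, then $\delta = 1$, and consequently $[p(Y)]$ has all coefficients in $\{0,1\}$.

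The only real point is the nonnegativity and integrality of the coefficients of an effective class, which holds because one can extract each coefficient by intersecting with a complementary coordinate subspace in general position, for instance a general translate under the torus or the group $(PGL_2)^{n-1}$. This yields a nonnegative intersection number, and it is the same Bertini-type argument that makes multiplicity-freeness well behaved. I expect no serious obstacle beyond checking that the pushforward of the basis classes is as stated.
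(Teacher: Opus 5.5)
Your proof is correct and follows the same route as the paper. Both reduce to forgetting one factor, handle the non-generically-finite case via Proposition~\ref{prop:projectionproduct}, and in the generically finite case use $p_*[Y] = \delta[p(Y)]$ to force $\delta = 1$; you add the explicit computation of $p_*$ on basis classes, and only the integrality of the coefficients of $[p(Y)]$ together with $[p(Y)] \neq 0$ is actually needed, not their nonnegativity.
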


\begin{proof}
It suffices to consider the case of a coordinate projection $p$ which forgets one factor, i.e., $p$ is a map to $(\mathbb{P}^1)^{n-1}$. If $p$ is not generically finite, then the result follows from Proposition~\ref{prop:projectionproduct}. If $p$ is generically finite, then $d[p(Y)] = p_*[Y]$ in the Chow homology of $(\mathbb{P}^1)^{n-1}$, where $d$ is the degree of the restriction of $p$ to $Y$. As $p_*[Y]$ has all coefficients equal to $0$ or $1$, we see that $d=1$ and that $[p(Y)]$ has all coefficients equal to $0$ or $1$. 
\end{proof}

The proof of Proposition~\ref{prop:image} shows that if the restriction of a coordinate projection to $Y$ is generically finite, then it is birational. 

\begin{proposition}\label{prop:geomintegral}
Let $Y$ be a multiplicity-free subvariety of $(\mathbb{P}^1)^n$. Then $Y$ is geometrically integral. 
\end{proposition}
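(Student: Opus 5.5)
We argue by induction on $n$, using coordinate projections together with Propositions~\ref{prop:projectionproduct} and~\ref{prop:image}. Let $\bar k$ be an algebraic closure of $k$ and let $Y_{\bar k}$ be the base change. The Chow class is unchanged by base change, so $[Y_{\bar k}] = [Y]$ has coefficients in $\{0,1\}$. Every irreducible component of $Y_{\bar k}$ has dimension $\dim Y$, and every one of its coefficients is nonnegative, since an effective cycle meets a general coordinate subspace of complementary dimension properly. Also, $Y_{\bar k}$ is generically reduced when $Y$ is geometrically reduced. The task is therefore to show two things: $Y_{\bar k}$ has only one component, and it is generically reduced.

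\textbf{Irreducibility and reducedness via a degree-one class.} The coefficients of $[Y]$ are the degrees of the coordinate projections $p_S\colon Y \to (\mathbb{P}^1)^S$ with $|S| = \dim Y$, and at least one is nonzero. Choose $S$ with coefficient $1$. Then $p_S\colon Y \to (\mathbb{P}^1)^S$ is generically finite of degree $1$, hence birational. Write $[Y_{\bar k}] = \sum_i m_i [Z_i]$ over its components $Z_i$ with multiplicities $m_i$. The $S$-coefficient then gives $1 = \sum_i m_i \deg(p_S|_{Z_i})$. Each $Z_i$ is defined over a finite extension and the components are Galois conjugate, so all the degrees $\deg(p_S|_{Z_i})$ are equal, and all the $m_i$ are equal. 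Hence either there is a single component with $m_1 = 1$, or every degree is $0$.

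\textbf{The main obstacle.} The obstacle is excluding the case where every degree is $0$, and this is where we argue directly rather than by counting. If $p_S|_Y$ is birational, then the function field $k(Y)$ is isomorphic to $k((\mathbb{P}^1)^S) = k(t_1,\dots,t_d)$, a purely transcendental extension of $k$. So $k$ is algebraically closed in $k(Y)$, and $k(Y)/k$ is separable. These are exactly the conditions for $Y$ to be geometrically integral, by the standard criterion for an integral variety. The Chow computation is then only needed to produce $S$.

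\textbf{Producing $S$.} It remains to check that some coefficient of $[Y]$ equals $1$. This is immediate: $[Y] \neq 0$ because $Y$ is a nonzero effective cycle on a projective variety, so some coefficient is positive, hence equal to $1$ by multiplicity-freeness. Alternatively, Proposition~\ref{prop:image} and Proposition~\ref{prop:projectionproduct} reduce by induction to the case where some projection is birational onto $(\mathbb{P}^1)^{\dim Y}$. In either case $Y$ is birational to affine space over $k$, and geometric integrality follows. One point to verify is that degree in Chow computed over $k$ agrees with the separable degree. Since $[k(Y):k(t)] = 1$, inseparability cannot occur, so this causes no difficulty.
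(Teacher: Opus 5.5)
Your argument is correct, and its core is a genuinely different route from the paper's: you pick $S$ with coefficient $1$, conclude that $p_S|_Y$ is birational, and get $k(Y)\cong k(t_1,\dots,t_d)$.

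The paper instead passes to an extension $K$ and uses that the components of $Y_K$ are Galois conjugate. So they share one class $\alpha$ and one multiplicity $m$, and $[Y]=[Y_K]=mr\alpha$ forces $r=m=1$. It then upgrades generic reducedness of $Y_K$ to reducedness via the Stacks Project lemma that a non-reduced base change of a reduced variety is generically non-reduced.

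You instead identify the $S$-coefficient, via the projection formula, with $\deg([Y]\cdot p_S^*[\mathrm{pt}])=[k(Y):k(t_1,\dots,t_d)]$, and finish with the function-field criterion for geometric integrality. Fulton's pushforward uses the full field degree, so your worry about separable degree is moot.

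What each route buys:
\begin{itemize}
\item Yours avoids both the Galois descent of components and the reducedness lemma. It also proves more, namely that $Y$ is rational over $k$.
\item The paper's only uses that $[Y]$ is primitive and that the basis classes are defined over $k$. So it transfers essentially verbatim to the Schubert basis of $G/P$, where this reduction is implicitly needed in Section~\ref{ssec:GmodP}. Yours relies on the coefficients in $(\mathbb{P}^1)^n$ being degrees of maps onto $(\mathbb{P}^1)^S$.
\end{itemize}

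Your write-up could be tightened in three places:
\begin{itemize}
\item The ``main obstacle'' does not exist. The identity $1=\sum_i m_i\deg(p_S|_{Z_i})$ already rules out all degrees vanishing, so your second paragraph is essentially the paper's proof.
\item The announced induction on $n$ is never used.
\item Your first paragraph states the reducedness implication backwards. The Galois route needs ``$Y$ reduced and $Y_{\bar k}$ generically reduced $\Rightarrow$ $Y_{\bar k}$ reduced,'' not its converse.
\end{itemize}
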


\begin{proof}
Let $K$ be an extension of $k$, and let $Y_K$ be the base change of $Y$ to $K$. The classes of $Y_K$ and $Y$ in the Chow homology of $(\mathbb{P}^1)^n$ have the same expression in the bases of classes of coordinate subspaces. 
There is some finite Galois extension of $k$ such that the components of $Y_K$ are defined over that extension, and the Galois group of that extension acts transitively on the components \cite[Section 0364]{stacks-project}. In particular, the fundamental class of each component is the same. The multiplicity-free property then implies that $Y_K$ is irreducible. 

By \cite[04KS]{stacks-project}, if $Y_K$ is non-reduced, then $Y_K$ is generically non-reduced. This contradicts the multiplicity-free property. 
\end{proof}

\medskip

The remaining results will be used to give examples. 
First, we mention three other descriptions of multiplicity-free varieties. In \cites{CS2,CSSurvey}, Conca, de Negri, and Gorla study a family of multihomogeneous ideals in a multigraded polynomial ring that they call \emph{Cartwright--Sturmfels ideals}. These are multihomogeneous ideals whose generic initial ideal is radical. The saturated ideals in the coordinate ring $k[x_1, \dotsc, x_n, y_1, \dotsc, y_n]$ of $\mathbb{A}^{2n}$ which correspond to multiplicity-free subvarieties of $(\mathbb{P}^1)^n$ are exactly prime Cartwright--Sturmfels ideals. 

In \cite[Remark 2.15]{HL24}, it is shown that a subvariety $X$ of $\mathbb{A}^n$ has the property that the closure of $X$ in $(\mathbb{P}^1)^n$ is multiplicity-free if and only if all Gr\"{o}bner degenerations of $X$ are generically reduced. This furthermore implies that all Gr\"{o}bner degenerations of $X$ are reduced, and that the ideal of $X$ has a universal Gr\"{o}bner basis consisting of squarefree-supported polynomials, i.e., all monomials appearing in each polynomial with nonzero coefficient are squarefree. 

Finally, multiplicity-free subvarieties of $(\mathbb{P}^1)^n$ are exactly integral subvarieties of $(\mathbb{P}^1)^n$ that are \emph{kindred} in the sense of \cite[Definition 2.1]{EFL}, which is a condition on the class of the structure sheaf in $K((\mathbb{P}^1)^n)$. See \cite[Proposition 2.11]{EFL}, which is a consequence of the main result of \cite{BrionMultiplicity}.

\begin{proposition}\label{prop:slice}
Let $Y$ be a multiplicity-free subvariety of $(\mathbb{P}^1)^n$, and let $p \colon (\mathbb{P}^1)^n \to (\mathbb{P}^1)^{\ell}$ be a coordinate projection. Then the fibers of $p$ are reduced, and any irreducible component of any fiber of $p$ is multiplicity-free. 
\end{proposition}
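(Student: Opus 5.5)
The plan is to reduce to fibers over points and then compute Chow classes directly. First I would reduce to the case where $k$ is algebraically closed. By Proposition~\ref{prop:geomintegral}, $Y$ stays integral and multiplicity-free after base change. Reducedness of a fiber can be checked after base change, and a component of a fiber over a closed point of $(\mathbb{P}^1)^{\ell}$ becomes, after base change, a union of Galois-conjugate components. By the argument of Proposition~\ref{prop:geomintegral}, if each geometric component is multiplicity-free then so is the original one.

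Next I would reduce to fibers of the restriction $p|_Y \colon Y \to Z := p(Y)$. By Proposition~\ref{prop:image}, $Z$ is multiplicity-free. For a point $z \in (\mathbb{P}^1)^{\ell}$, the fiber $Y_z$ is a closed subscheme of $\{z\} \times (\mathbb{P}^1)^{n-\ell}$. For generic $z \in Z$, the class of $Y_z$ in $A_*((\mathbb{P}^1)^{n-\ell})$ is obtained by intersecting $[Y]$ with the class of a point of $(\mathbb{P}^1)^{\ell}$, i.e. with $\prod_{i \le \ell} H_i$, where $H_i$ is the pullback of a point class. For coordinate subspaces this intersection sends each basis class to a basis class or to $0$, and distinct basis classes to distinct ones. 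Hence the resulting class has all coefficients in $\{0,1\}$. This gives the statement generically, but arbitrary, possibly special, fibers are the real content.

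For special fibers I would use a degeneration, or more simply induction on $\ell$. The case of $\ell$ general reduces to $\ell = 1$ by iterating: the fiber over $(z_1,\dots,z_\ell)$ is the fiber of the fiber. To make the iteration work, I need to know that each component of the first fiber is multiplicity-free and that the fiber is reduced. Since such a component lives in $\{z_1\}\times(\mathbb{P}^1)^{n-1}$, we can then apply the $\ell=1$ case to it. So take $\ell = 1$ and $p$ the projection to the first factor. If $p|_Y$ is constant, the fiber is $Y$ itself. Otherwise $Y \to \mathbb{P}^1$ is flat, since $Y$ is integral and dominates a smooth curve. Then every scheme-theoretic fiber $Y_z$, as a divisor on $Y$, has the same Chow class $[Y]\cdot H_1$ in $(\mathbb{P}^1)^n$. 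That class has $0/1$ coefficients in the basis, restricted to classes of the form $\{pt\}\times(\text{coordinate subspace})$.

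Now write the fundamental cycle $[Y_z] = \sum m_j [W_j]$ over components $W_j$ with multiplicities $m_j \ge 1$. Each $[W_j]$ is effective, and effective classes in $(\mathbb{P}^1)^n$ have nonnegative coefficients in the coordinate-subspace basis, since these are dual to the nef classes. Each $[W_j]$ is also nonzero. Because the total has coefficients in $\{0,1\}$, every $m_j = 1$ and each $[W_j]$ has $0/1$ coefficients. So each component is multiplicity-free and $Y_z$ is generically reduced.

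The main obstacle is upgrading generic reducedness to reducedness. Here I would use that $Y$ is Cohen--Macaulay (Proposition~\ref{prop:CM}). The fiber $Y_z$ is a Cartier divisor cut out by a nonzerodivisor, so $Y_z$ is Cohen--Macaulay and has no embedded points. Being generically reduced and $S_1$, it is reduced. This completes the $\ell=1$ step; the components $W_j$ are again integral and multiplicity-free, so the induction proceeds.
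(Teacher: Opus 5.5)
Your $\ell=1$ argument is correct, and it differs from the paper's. You get reducedness from the Cohen--Macaulayness of $Y$ (Proposition~\ref{prop:CM}): $Y_z$ is a Cartier divisor with multiplicity-free class $H_1\cdot[Y]$, so it is generically reduced and $S_1$. The multiplicity-free half of your induction on $\ell$ is also fine, and is essentially the paper's argument: a component of the full fiber is a component of the slice of some component $W_j$ of $Y_{z_1}$.

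\textbf{The gap is reducedness for $\ell\ge 2$.} Your $\ell=1$ step applies only to integral multiplicity-free varieties. Iterating it over the $W_j$ therefore shows only that each $W_j\cap\{x_2=z_2\}$ is reduced. But the fiber of $Y$ is $Y_{z_1}\cap\{x_2=z_2\}$, whose ideal is $\bigcap_j I_{W_j}+(x_2-z_2)$, and slicing does not commute with unions. For example, in $(\mathbb{P}^1)^2$ the diagonal $\{x_2=x_3\}$ and the line $\{x_3=0\}$ are both multiplicity-free, and each meets $\{x_2=0\}$ in a reduced point. Their union, however, meets it in $V(x_2,x_3^2)$. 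So further input from $Y$ is needed.

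Running your Cohen--Macaulay argument on all of $Y_{z_1}$, which is Cohen--Macaulay with multiplicity-free class, repairs the cases where $x_2-z_2$ vanishes on none of the $W_j$, or on all of them. When it vanishes on some but not others, it is a zero-divisor on $Y_{z_1}$. The fiber is then no longer a Cartier divisor on a Cohen--Macaulay scheme, and generic reducedness no longer gives reducedness. Using $(A\cap B)+J=A\cap(B+J)$ for $J\subseteq A$ only reduces you to slicing a union of components, and that union need not be Cohen--Macaulay.

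This is why the paper does not iterate for reducedness. It uses \cite[Remark 2.15, Example 2.16]{HL24} instead. Setting $x_1=\dots=x_\ell=0$ in a squarefree-supported universal Gr\"obner basis of $Y\cap\mathbb{A}^n$ gives such a basis for the slice. Hence every initial ideal of the slice is a squarefree monomial ideal, and the slice is reduced for all $\ell$ at once.

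\textbf{A minor point: your descent from $\overline{k}$ is wrong as stated.} The proposition only holds for fibers over $k$-points: for $Y=\mathbb{P}^1$, the fiber over a closed point of degree $2$ has class $2[\mathrm{pt}]$. Even over a $k$-point, a component splitting into $r\ge 2$ conjugate geometric components would have class $r[W']$. No base change is needed, though, since your $\ell=1$ argument works verbatim over $k$.
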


\begin{proof}
Choose coordinates $x_1, \dotsc, x_n$ on $(\mathbb{P}^1)^n$, and assume that the map to $(\mathbb{P}^1)^{\ell}$ is the projection onto the first $\ell$ coordinates. It suffices to consider the fiber over the origin, i.e., to show that $Y \cap V(x_1, \dotsc, x_{\ell})$ is reduced and that all irreducible components of this fiber are multiplicity-free subvarieties. 

To prove that $Y \cap V(x_1, \dotsc, x_{\ell})$ is reduced, we can work locally. View $\mathbb{A}^n$ as embedded in $(\mathbb{P}^1)^n$ in the usual way. It suffices to show that $Y \cap \mathbb{A}^n \cap V(x_1, \dotsc, x_{\ell})$ is reduced. By \cite[Remark 2.15]{HL24}, the affine variety $Y \cap \mathbb{A}^n$ has a universal Gr\"{o}bner basis consisting of squarefree-supported polynomials. By \cite[Example 2.16]{HL24}, $Y \cap \mathbb{A}^n \cap V(x_1, \dotsc, x_{\ell})$ has a universal Gr\"{o}bner basis consisting of squarefree-supported polynomials, obtained by evaluating the original universal Gr\"{o}bner basis at $x_1 = \dotsb = x_{\ell} = 0$. This implies that any Gr\"{o}bner degeneration of $Y \cap \mathbb{A}^n \cap V(x_1, \dotsc, x_{\ell})$ is reduced, so $Y \cap \mathbb{A}^n \cap V(x_1, \dotsc, x_{\ell})$ is reduced. 

Let $Z$ be an irreducible component of $Y \cap V(x_1, \dotsc, x_{\ell})$. Because $Y$ is integral, either $x_1$ vanishes on $Y$, or $Y \cap V(x_1)$ is pure of codimension $1$ in $Y$, and the fundamental class of $Y \cap V(x_1)$ is $[V(x_1)] \smallfrown [Y]$. In particular, in either case the fundamental class of $Y \cap V(x_1)$ is multiplicity-free. As $Y \cap V(x_1)$ is pure-dimensional, each irreducible component of $Y \cap V(x_1)$ is multiplicity-free. By choosing a component of $Y \cap V(x_1)$ which contains $Z$ and repeating this argument, we see that $Z$ is multiplicity-free. 
\end{proof}

We are interested in the singularities of multiplicity-free subvarieties. As properties of singularities are local, our results apply to varieties which locally look like multiplicity-free varieties. We say that a variety $X$ over $k$ is of \emph{multiplicity-free type} if $X$ is covered by open subsets which are isomorphic to open subsets of multiplicity-free subvarieties of products of projective lines. Theorem~\ref{thm:main} implies that a variety of multiplicity-free type is geometrically $F$-rational. 

We say that a subvariety $X$ of a product of projective spaces $\mathbb{P}^{a_1} \times \dotsb \times \mathbb{P}^{a_n}$ is multiplicity-free if the expansion of the fundamental class of $X$ in the basis of $A_*(\mathbb{P}^{a_1} \times \dotsb \times \mathbb{P}^{a_n})$ given by classes of coordinate subspaces has all coefficients equal to $0$ or $1$.
The proof of the following result is a modification of the proof of \cite[Proposition 1.5]{HL24}.

\begin{proposition}\label{prop:product}
Let $X$ be a multiplicity-free subvariety of $\mathbb{P}^{a_1} \times \dotsb \times \mathbb{P}^{a_n}$. Then $X$ is of multiplicity-free type. 
\end{proposition}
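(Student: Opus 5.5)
Since the notion is local, I will cover $X$ by the standard affine charts $U = \mathbb{A}^{a_1} \times \dotsb \times \mathbb{A}^{a_n}$, where each factor is the complement of a coordinate hyperplane in $\mathbb{P}^{a_i}$. Set $N = a_1 + \dotsb + a_n$. Embed $U = \mathbb{A}^N$ in $(\mathbb{P}^1)^N$ in the usual way, and let $Y$ be the closure of $X \cap U$ in $(\mathbb{P}^1)^N$. Then $X \cap U$ is isomorphic to the open subset $Y \cap \mathbb{A}^N$ of $Y$. It therefore suffices to show that $Y$ is multiplicity-free whenever $X \cap U$ is nonempty. By permuting coordinates, every point of $X$ lies in some such chart, so this covers $X$.

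To prove that $Y$ is multiplicity-free I will use the characterization of \cite[Remark 2.15]{HL24}. The closure of $X\cap U$ in $(\mathbb{P}^1)^N$ is multiplicity-free if and only if every Gr\"obner degeneration of $X \cap U \subset \mathbb{A}^N$ is generically reduced. So I need the analogous statement on the $X$ side: multiplicity-freeness of $X \subset \prod \mathbb{P}^{a_i}$ forces all multihomogeneous Gr\"obner degenerations of $X$ to be reduced, or at least generically reduced. This is the Cartwright--Sturmfels perspective. The multidegree of a multigraded ideal is invariant under Gr\"obner degeneration, and a monomial ideal with multiplicity-free multidegree has, after saturation, generically reduced top-dimensional components. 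This is because each top-dimensional component of a monomial ideal is a coordinate subspace whose multiplicity contributes to the coefficient of that subspace in the multidegree. A generic-initial-ideal argument, as in \cite{CS2}, then upgrades this to radicality, but generic reducedness is all I need.

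The main step is to compare Gr\"obner degenerations of the affine piece $X \cap U$ with multihomogeneous degenerations of $X$. Given a weight vector $w$ on the affine coordinates of $U$, I extend it to the homogeneous coordinates of each $\mathbb{P}^{a_i}$ by giving the dehomogenizing variable weight $0$, or after a shift, a weight large enough. With this choice the initial ideal of the multihomogeneous ideal of $X$ dehomogenizes to $\mathrm{in}_w$ of the ideal of $X\cap U$, up to components that live at infinity. Generic reducedness of the multihomogeneous initial scheme then gives generic reducedness of $\mathrm{in}_w(X \cap U)$ away from the hyperplanes at infinity. This is where the modification of the proof of \cite[Proposition 1.5]{HL24} enters. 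The delicate point is that dehomogenizing could lose or create components, because initial ideals and saturation/dehomogenization need not commute. I would handle this as follows. Choose the weight on the dehomogenizing variables so that they are smallest, and refine by a term order. Then the initial ideal contains no dehomogenizing variable as a zero-divisor, and setting those variables to $1$ commutes with taking initial terms. This is the standard homogenization lemma for Gr\"obner bases.

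Finally, each component of $\mathrm{in}_w(X\cap U)$ is the affine part of a component of the multihomogeneous degeneration. That degeneration has the same multidegree as $X$, hence is multiplicity-free, so it is generically reduced. Applying \cite[Remark 2.15]{HL24} then gives that $Y$ is multiplicity-free, and hence that $X$ is of multiplicity-free type.
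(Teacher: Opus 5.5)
Your strategy is sound and genuinely different from the paper's. The paper stays in intersection theory. For each $S$ it realizes the coefficient $a_S$ of $[Y]$ as the length of $X\cap U\cap\bigcap_{i\in S}V(x_i-\lambda_i)$ for general $\lambda_i$, which by Bertini lies in $U\cap X^{sm}$. The closures $V_j$ of the affine hyperplanes $x_j=\lambda_j$ are pulled back from hyperplanes of the factors, so $\deg([\bigcap_{j\in S}V_j]\frown[X])$ is $0$ or $1$ because $X$ is multiplicity-free. Fulton's refined intersection product, with positivity from global generation of the tangent bundle, then shows that the points in $U\cap X^{sm}$ contribute exactly $a_S$ and everything else contributes nonnegatively.

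You instead use the Gr\"obner criterion of \cite[Remark 2.15]{HL24}. A multihomogeneous Gr\"obner degeneration of $X$ has class $[X]$, hence multiplicity one along every component, and the degeneration of $X\cap U$ is the part of it meeting the chart. This is the same positivity mechanism, with components at infinity playing the role of the excess intersection outside $U\cap X^{sm}$. Your version replaces Bertini and refined intersections by flat degeneration and dehomogenization. The price is that it imports the nontrivial direction of \cite[Remark 2.15]{HL24}, whereas the paper's argument is self-contained modulo Fulton.

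The paragraph beginning ``I would handle this as follows'' is wrong and should be dropped. With several blocks, the dehomogenizing variables cannot be kept out of initial terms. Take the diagonal $V(x_1z_2-x_2z_1)\subset\mathbb{P}^1\times\mathbb{P}^1$, which has class $h_1+h_2$ and dehomogenizing variables $z_1,z_2$. Every term of the generator contains $z_1$ or $z_2$, so after refining by any term order the initial ideal is $(x_1z_2)$ or $(x_2z_1)$, and a dehomogenizing variable is a zero-divisor. The lemma you quote is for a single grading and degree-compatible orders, whereas \cite[Remark 2.15]{HL24} needs arbitrary weights $w$ on $\mathbb{A}^N$. Moreover, making the dehomogenizing variables lightest amounts to taking top-degree parts first, which for non-degree-compatible $w$ computes a different degeneration than $\mathrm{in}_w(I)$.

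None of this is needed. Give each $z_i$ weight $0$, and call the extended weight $w'$. The terms of a multihomogeneous $g$ then correspond bijectively, weight for weight, to those of $g|_{z=1}$, so $\mathrm{in}_{w'}(g)|_{z=1}=\mathrm{in}_w(g|_{z=1})$. Every element of the ideal $I$ of $X\cap U$ is the dehomogenization of an element of the multihomogeneous ideal $J$ of $X$, so this gives $\mathrm{in}_{w'}(J)|_{z=1}=\mathrm{in}_w(I)$ exactly. Hence $V(\mathrm{in}_w(I))\times\mathbb{G}_m^n$ is an open subscheme of $V(\mathrm{in}_{w'}(J))$. Components at infinity, such as $V(z_2)$ above, are discarded automatically, and generic reducedness restricts to the open subscheme.

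Two smaller points. First, $\mathrm{in}_{w'}(J)$ need not be monomial, so your monomial count does not apply directly. Replace it by the observation that each component $Z$ of the degeneration has $[Z]$ nonzero with nonnegative coefficients in the coordinate basis. These classes, weighted by multiplicities, add up to the multiplicity-free class $[X]$, so every multiplicity is $1$; this also handles distinct coordinate subspaces sharing a class. Second, the class only sees top-dimensional components, so you should note that the degeneration is equidimensional. It is, since it is cut out by $t$ on the integral total space of the family.
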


\begin{proof}
By extending scalars, we can reduce to the case when $k$ is infinite. 
Let $N = a_1 + \dotsb + a_n$, and let $U$ be a copy of $\mathbb{A}^{N}$ embedded in $\mathbb{P}^{a_1} \times \dotsb \times \mathbb{P}^{a_n}$ as the complement of the union of coordinate hyperplanes. 
By applying an automorphism of $\mathbb{P}^{a_1} \times \dotsb \times \mathbb{P}^{a_n}$, we may assume that $X \cap U$ is dense in $X$. Let $Y$ denote the closure of $U \cap X$ in $(\mathbb{P}^1)^N$.  It suffices to show that $Y$ is multiplicity-free. 

Let $S$ be a subset of $\{1, \dotsc, N\}$ of size equal to the dimension of $X$. Let $a_S$ be the degree of the cap product $[\bigcap_{i \in S} V(x_i)] \smallfrown [Y]$ in the Chow homology of $(\mathbb{P}^1)^N$. For each $i \in \{1, \dotsc, N\}$, let $\lambda_i$ be a general element of $k$. 
Let $X^{sm}$ be the smooth locus of $X$. 
By a version of Bertini's theorem \cite[Lemma B.9.1]{FultonIntersection}, $U \cap X^{sm} \cap \bigcap_{i \in S} V(x_i - \lambda_i)$ will be a $0$-dimensional subscheme of $U$ of length $a_S$. 

Let $V_j$ be the closure of $U \cap V(x_j - \lambda_j)$ in $\mathbb{P}^{a_1} \times \dotsb \times \mathbb{P}^{a_n}$. Because $X$ is multiplicity-free, the degree of $[\bigcap_{j \in S} V_j] \smallfrown [X]$ is either $0$ or $1$. Let $C_1, \dotsc, C_r$ be the irreducible components of $X \cap \bigcap_{j \in S} V_j$. 
By \cite[Section 6.1]{FultonIntersection}, we can write
\begin{equation}\label{eq:class}
[\bigcap_{j \in S} V_j] \smallfrown [X] = \sum_{i=1}^{r} m_i \alpha_i,
\end{equation}
where $\alpha_i$ is a class pushed forward from $C_i$ and $m_i$ is a positive integer. If $C_i$ has dimension $0$, then $\alpha_i$ is equal to the class of the reduction of $C_i$ and $m_i$ is bounded above by the multiplicity of $C_i$ \cite[Proposition 7.1]{FultonIntersection}. 
For each $C_i$ contained in $U \cap X^{sm}$, \cite[Proposition 7.1]{FultonIntersection} shows that $m_i$ is equal to the multiplicity of $C_i$. 
Because the tangent bundle of $\mathbb{P}^{a_1} \times \dotsb \times \mathbb{P}^{a_n}$ is globally generated, by \cite[Theorem 12.2(a)]{FultonIntersection}, each $\alpha_i$ is an effective class. When we take the degree of \eqref{eq:class}, the terms corresponding to the components which are contained in $U \cap X^{sm}$ contribute $a_S$ and all other terms contribute non-negatively, so $a_S \le 1$. 
\end{proof}

Given a subvariety $Y$ of $(\mathbb{P}^1)^n$, its \emph{multicone} is the subvariety of $\mathbb{A}^{2n}$ cut out by the ideal of $Y$ in the homogeneous coordinate ring of $(\mathbb{P}^1)^n$.

\begin{proposition}\label{prop:cone}
Let $Y$ be a multiplicity-free subvariety of $(\mathbb{P}^1)^n$. Then the multicone of $Y$ in $\mathbb{A}^{2n}$ is of multiplicity-free type. 
\end{proposition}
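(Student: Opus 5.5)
Let $C \subset \mathbb{A}^{2n}$ be the multicone of $Y$. The plan is to exhibit $C$ as an open subset of a multiplicity-free subvariety of a product of projective lines, more precisely of $(\mathbb{P}^1)^{2n}$, or of a product of projective spaces, and then invoke Proposition~\ref{prop:product}. The natural candidate is the closure $\overline{C}$ of $C$ in $(\mathbb{P}^1)^{2n}$, where $\mathbb{A}^{2n} \subset (\mathbb{P}^1)^{2n}$ is embedded in the usual way. Since $C \cap \mathbb{A}^{2n}$ is closed in $\mathbb{A}^{2n}$, $C$ is an open subset of $\overline{C}$. It then suffices to show that $\overline{C}$ is integral and multiplicity-free.

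First, I will check integrality. Since the ideal of $Y$ is prime and saturated in the Cox ring $k[x_1, y_1, \dotsc, x_n, y_n]$, the multicone $C$ is integral. Indeed, $C$ contains the $\mathbb{G}_m^n$-torsor over $Y$ as a dense open subset, so $C$ is irreducible of dimension $\dim Y + n$.

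Next, to show that $\overline{C}$ is multiplicity-free, I will use the characterization from \cite[Remark 2.15]{HL24} recalled above: the closure of an affine variety $X \subset \mathbb{A}^{2n}$ in $(\mathbb{P}^1)^{2n}$ is multiplicity-free if and only if all Gr\"{o}bner degenerations of $X$ are generically reduced. The ideal $I$ of $C$ is exactly the multihomogeneous prime ideal of $Y$, and by the Cartwright--Sturmfels description recalled above it is a prime Cartwright--Sturmfels ideal. The key input is then the theorem of Conca, de Negri and Gorla \cites{CS2,CSSurvey} that every initial ideal of a Cartwright--Sturmfels ideal is radical, for every term order. From this, every Gr\"{o}bner degeneration of $C$ is reduced, hence generically reduced, and so $\overline{C}$ is multiplicity-free.

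The main obstacle is justifying the passage from ``generic initial ideal radical'' to ``every initial ideal radical'' for Cartwright--Sturmfels ideals. This is a known theorem in their theory, but if I wanted to avoid citing it, I would instead compute directly. One could try to express the class of $\overline{C}$ via degenerations compatible with the torus action: the coefficients of $[\overline{C}]$ are degrees of intersections of $C$ with general affine-coordinate slices $x_i = \lambda_i$, $y_j = \mu_j$. For a set $S$ of coordinates of size $\dim C$, such a slice reduces via the $\mathbb{G}_m^n$-scaling to a slice of $Y$ by general hyperplanes in $(\mathbb{P}^1)^n$. I would then need to check that these degrees are $0$ or $1$ whenever the coefficients of $[Y]$ are. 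As a fallback I would attempt this direct count by cases on how many of $x_i, y_i$ lie in $S$ for each $i$:
\begin{enumerate}
\item if both lie in $S$, the slice fixes a point of the $i$-th $\mathbb{P}^1$ together with its scaling;
\item if exactly one lies in $S$, the slice fixes only the scaling;
\item if neither lies in $S$, the factor is unconstrained.
\end{enumerate}
Counting dimensions, $|S| = \dim Y + n$, so the number of factors in case (1) equals the number in case (3) plus $\dim Y$. I expect this bookkeeping to reduce each coefficient of $[\overline{C}]$ to a coefficient of $[Y]$ or of a coordinate projection of $Y$, which is multiplicity-free by Proposition~\ref{prop:image}. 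The Gr\"{o}bner route is cleaner, so I would present that first.
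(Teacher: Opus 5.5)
Your main argument is correct, but it follows a different route from the paper's.

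The paper closes $C$ up in $(\mathbb{P}^2)^n$ rather than in $(\mathbb{P}^1)^{2n}$. There the cone over a coordinate subspace closes up to a coordinate subspace, so the class $\sum_S a_S\prod_{i\in S}[H_i]\smallfrown[(\mathbb{P}^2)^n]$ can be read off from $[Y]$, and Proposition~\ref{prop:product} then gives multiplicity-free type.

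In effect both arguments prove that $\overline{C}\subseteq(\mathbb{P}^1)^{2n}$ is multiplicity-free. This is exactly what the proof of Proposition~\ref{prop:product} yields here, since $C$ is already dense in its closure in $(\mathbb{P}^2)^n$ and no automorphism is needed. The paper reaches this by a one-line class computation plus Bertini and Fulton's positivity. You reach it through the chain: multiplicity-free prime $\Rightarrow$ Cartwright--Sturmfels $\Rightarrow$ every initial ideal is CS, hence radical $\Rightarrow$ multiplicity-free closure by \cite[Remark 2.15]{HL24}. The step you flag is standard: being CS depends only on the multigraded Hilbert series, which is preserved under passing to initial ideals, and CS ideals are radical. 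Still, your route is entirely citation-driven, and the identification of multiplicity-free primes with prime CS ideals itself rests on Brion's theorem, so it imports much more than the paper needs.

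Your fallback is the most elementary option, and it works better than you anticipate.
\begin{enumerate}
\item If some index has neither $x_i$ nor $y_i$ in $S$, your dimension count shows that $S$ imposes more than $\dim Y$ general point conditions on $Y$. The same holds on the images of $Y$ under coordinate projections, which govern the strata of $C$ where some $(x_i,y_i)=0$. So the coefficient is $0$.
\item Otherwise the set $A$ of doubly-constrained indices has size $\dim Y$. The singly-constrained coordinates just fix the scaling isomorphically on the torsor part, so the coefficient is exactly the coefficient $a_{[n]\setminus A}$ of $[Y]$.
\end{enumerate}
Hence $[\overline{C}]=\sum_S a_S\prod_{i\in S}([H_{x_i}]+[H_{y_i}])\smallfrown[(\mathbb{P}^1)^{2n}]$, where $H_{x_i},H_{y_i}$ are the coordinate hyperplanes in the corresponding factors. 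This is visibly multiplicity-free, and every coefficient is $0$ or a coefficient of $[Y]$ itself; coefficients of projections of $Y$ never appear.
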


\begin{proof}
The closure of the multicone in $(\mathbb{P}^2)^n$ is a multiplicity-free subvariety: if $H_1, \dotsc, H_n$ are coordinate hyperplanes in $(\mathbb{P}^1)^n$ and $[Y] = \sum_{S} a_S \prod_{i \in S} [H_i] \smallfrown [(\mathbb{P}^1)^n]$, then the closure of the multicone in $(\mathbb{P}^2)^n$ has fundamental class given by $\sum_S a_S \prod_{i \in S} [H_i] \smallfrown [(\mathbb{P}^2)^n]$. The result then follows from Proposition~\ref{prop:product}. 
\end{proof}

\subsection{Examples}

We now use the tools developed above to give some interesting examples of varieties of multiplicity-free type. These examples show that multiplicity-free varieties need not have strongly $F$-regular or even $F$-pure singularities.

\begin{example}\label{ex:hypersurface}
Let $f \in k[x_1, \dotsc, x_n]$ be a squarefree-supported polynomial, i.e., all monomials appearing in $f$ with nonzero coefficient are squarefree. Assume that $f$ is irreducible. Then $V(f)$ is of multiplicity-free type, and the closure of $V(f)$ in $(\mathbb{P}^1)^n$ is multiplicity-free. Theorem~\ref{thm:main} then recovers the main result of \cite{Fsingsquarefree}, which is a strengthening of the main result of \cite{BWMW} and of \cite[Theorem 1.2]{BW}. 
\end{example}

In particular, Example~\ref{ex:hypersurface} includes configuration hypersurfaces, a class of polynomials introduced in \cite{BEK}. It was shown in \cite{BelkaleBrosnan} that the classes of a particular family of configuration hypersurfaces generate a localization of the Grothendieck ring of varieties. 

A direct proof that the hypersurfaces in Example~\ref{ex:hypersurface} have rational singularities was given in \cite{BWMW}. A stronger statement about singularities of pairs was proven in \cite{Sarkar}.

\begin{example}\label{ex:determinant}
Certain determinantal varieties are of multiplicity-free type. For example, for any natural numbers $m \le n$, the locus of matrices in $\mathbb{A}^{mn}$ which have either rank at most $1$, or which have rank at most $m-1$, is of multiplicity-free type; see \cite[Section 2.3]{HL24}. If $w$ is a permutation such that the Schubert polynomial has all coefficients equal to $0$ or $1$, then the matrix Schubert variety is of multiplicity-free type. See \cite[Corollary 1.6]{HL24}. 

If $1 < m < n$, then the locus of matrices of rank at most $1$, or of rank at most $m-1$, is not Gorenstein \cite[Corollary 8.9]{BrunsVetter} or even $\mathbb{Q}$-Gorenstein \cite[Corollary 8.4]{BrunsVetter}. In particular, varieties of multiplicity-free type need not be $\mathbb{Q}$-Gorenstein. 
\end{example}

It is known that matrix Schubert varieties are strongly $F$-regular, see \cite[Theorem 2.4.3]{KM05} and \cite{SchubertFregular}, which gives another proof that the varieties in Example~\ref{ex:determinant} are geometrically $F$-rational. 

\begin{example}
For some natural numbers $m \le n$, consider the subscheme of $\mathbb{A}^{mn}$ given by $m \times n$ matrices of rank at most $m-1$ where some subset of the entries are required to be equal to some fixed elements of $k$. For example, geometric properties of the subscheme of $\mathbb{A}^{mn}$ consisting of $m \times n$ matrices of rank at most $m - 1$ where some of the entries are $0$ were studied in depth in \cite{GiustiMerle} and its ideal was analyzed in \cite{BoocherDeterminantal}. Proposition~\ref{prop:slice} and Example~\ref{ex:determinant} show that this scheme is reduced. This scheme may be reducible, but each irreducible component is of multiplicity-free type. 
\end{example}

\begin{example}\label{ex:schubert}
Let $L \subseteq k^n$ be a linear subspace, and let $Y_L$ be the closure of $L$ in $(\mathbb{P}^1)^n$. This variety, which is known as the arrangement Schubert variety of $L$, was introduced in \cite{ArdilaBoocher}. By \cite[Theorem 1.3(c)]{ArdilaBoocher}, $Y_L$ is multiplicity-free. More generally, if $S_1, S_2, \dotsc, S_t$ are proper subspaces of $L$ with $\cap S_i = 0$, then the closure of the image of the map $\mathbb{P}L \dasharrow \mathbb{P}(L/S_1) \times \dotsb \times \mathbb{P}(L/S_t)$ is multiplicity-free \cite{BinglinLi}. For example, this includes the multiview varieties studied in \cite{AST}. 

Colin Crowley explained to the author that $Y_L$ is often not $\mathbb{Q}$-Gorenstein, and it can fail to have log-canonical singularities even if it is $\mathbb{Q}$-Gorenstein. Suppose that $L$ is a general $r$-dimensional subspace of $k^n$ with $1 < r < n$; this guarantees that $Y_L$ is $\mathbb{Q}$-Gorenstein. By \cite[Proposition 4.4]{GeoZonotopeII}, $K_{Y_L}$ is equal to $-\frac{2}{n - r + 1}$ times the divisor class of the restriction of $\mathcal{O}(1, \dotsc, 1)$ to $Y_L$. 
There is a distinguished resolution of $Y_L$ introduced in \cite{BHMPW20a} called the augmented wonderful variety. The smallest discrepancy of an exceptional divisor in this resolution is $\frac{2n}{n - r + 1} - r - 1$. In particular, $Y_L$ does not have log-canonical singularities if $n=7$ and $r=3$. 
\end{example}

\begin{example}
Let $L \subseteq k^n$ be a linear subspace that is not contained in any coordinate hyperplane. If $n > 1$, then for each $\{i, j\} \subseteq \{1, \dotsc, n\}$, there is a rational map $\mathbb{P}L \dasharrow \mathbb{P}^1$ induced by quotienting by the subspace $L \cap \{x_i = x_j = 0\}$. The closure of the image of the induced map $\mathbb{P}L \dasharrow (\mathbb{P}^1)^{\binom{n}{2}}$ is called the \emph{wonderful variety} $W_L$ of $L$. This is a smooth projective variety that can be constructed as an iterated blow-up of $\mathbb{P}L$ along strict transforms of subspaces; see \cite{dCP95}. It is a multiplicity-free subvariety of $(\mathbb{P}^1)^{\binom{n}{2}}$. 

There are examples of ample line bundles on wonderful varieties which have nonzero higher cohomology, see, e.g., \cite[Example 5.1]{EFL}. In particular, this gives an example where $W_L$ is not globally $F$-split; see \cite[Theorem 1.2.8]{BrionKumar}. By Proposition~\ref{prop:cone}, the multicone over $W_L$ in $\mathbb{A}^{2 \binom{n}{2}}$ is of multiplicity-free type. The multicone over $W_L$ is $F$-pure at the origin if and only if $W_L$ is globally $F$-split; see \cite[Section 2]{Hashimoto}. In particular, multiplicity-free varieties need not be $F$-pure. 
\end{example}

\begin{example}
Let $L \subseteq k^n$ be a linear subspace which is not contained in any coordinate hyperplane, and let $T$ be the standard torus embedded in $\mathbb{P}^{n-1}$. Let $P$ be a generalized permutohedron in $\mathbb{R}^n$, i.e., a lattice polytope whose edge directions are all parallel to $e_i - e_j$ for some $i$ and $j$. For convenience, we assume that $P$ has the maximal possible dimension, i.e., $\dim P = n-1$. Let $X_P$ be the corresponding toric variety, which contains $T$ as a dense open subset. It follows from \cite[Proposition 3.10]{EFL} that the closure of $\mathbb{P} L \cap T$ in $X_P$ is of multiplicity-free type. 
This includes the reciprocal plane of a linear space \cite{PS06} and Kapranov's visible contours compactification of a hyperplane arrangement complement \cite{Kapranov,HKTI}. 
\end{example}

\begin{example}\label{ex:bloch}
Let $L_1 \subseteq k^n$ and $L_2 \subseteq k^n$ be linear subspaces, and assume that $L_2$ is not contained in any coordinate hyperplane. Let $Z_{L_1, L_2}$ be the closure of $\{(v, u/v) : u \in L_1, \, v \in L_2 \cap \mathbb{G}_m^n\}$ in $\mathbb{A}^{2n}$; here the division is coordinate-wise. 

We claim that $Z_{L_1, L_2}$ is of multiplicity-free type. Indeed, the closure of $Z_{L_1, L_2}$ in $(\mathbb{P}^1)^{2n}$ is multiplicity-free. To check this, we may reduce to the case when $k$ is infinite. Let $S$ be a subset of $\{1, \dotsc, 2n\}$ of size equal to $\dim Z_{L_1, L_2}$. For each $i \in S$, let $\lambda_i$ be a general element of $k$.
To check that the closure of $Z_{L_1, L_2}$ in $(\mathbb{P}^1)^{2n}$ is multiplicity-free, it suffices to check that the intersection of $Z_{L_1, L_2}$ with $\cap_{i \in S} V(x_i - \lambda_i)$ is transverse and is either empty or consists of a reduced point.

Note that there is a birational map from $Z_{L_1, L_2}$ to $L_2 \times L_1$ given by $(x, y) \mapsto (x, xy)$, so the dimension of $Z_{L_1, L_2}$ is $\dim L_1 + \dim L_2$. By a version of Bertini's theorem \cite[B.9.1]{FultonIntersection}, we can arrange that the intersection of $Z_{L_1, L_2}$ with $\cap_{i \in S} V(x_i - \lambda_i)$ occurs inside the locus where this map is an isomorphism and that the intersection is dimensionally transverse.  If we pull back an equation of the form $x_i - \lambda_i$ to $L_2 \times L_1$ under this birational map, it becomes an affine equation in $L_2 \times L_1$. The intersection of $\dim L_1  + \dim L_2$-many dimensionally transverse affine equations in an open subset of $L_2 \times L_1$ is either empty or a reduced point. 

When $L_1 = L_2^{\perp}$, $Z_{L_1, L_2}$ is the multicone over a subvariety of $\mathbb{P}^{n-1} \times \mathbb{P}^{n-1}$ which was introduced in \cite[Section 4]{BEK} and studied in \cite{Bloch} and \cite{BDSW}; in \cite{BDSW} it is called Bloch's incidence variety. 
Theorem~\ref{thm:main} and Proposition~\ref{prop:cone} then recover the fact that the multicone over Bloch's incidence variety has $F$-rational singularities, first proved in \cite[Theorem 3.31]{BDSW}.
\end{example}

\begin{example}
Let $L_1 \subseteq k^n$ and $L_2 \subseteq k^n$ be linear subspaces, and assume that $L_2$ is not contained in any coordinate hyperplane. The semi-inverted Hadamard product of $L_1$ and $L_2$ is the closure of $\{u/v : u \in L_1, \, v \in L_2 \cap \mathbb{G}_m^n\}$ in $\mathbb{A}^n$; here the division is coordinate-wise. It follows from an argument analogous to the one in Example~\ref{ex:bloch} (or applying \cite[Proposition 2.9]{EFL} to Example~\ref{ex:bloch}) that semi-inverted Hadamard products are of multiplicity-free type. The fact that, over a field of characteristic $0$, semi-inverted Hadamard products have rational singularities plays an important role in \cite{BF24} and \cite{LiuVanishing}. 
\end{example}

\begin{example}
Choose some $r \le n$, and let $A \in \mathbb{A}^{rn}$ be a matrix of rank $r$. The group $GL_r \times \mathbb{G}_m^n$ acts on $\mathbb{A}^{rn}$, with $GL_r$ acting on the left by multiplication and $\mathbb{G}_m^n$ acting on the right by column scaling. Let $X_A$ be the closure of the orbit of $A$. 
Then the argument in \cite[Section 4]{BergetFink} and Proposition~\ref{prop:cone} show that $X_A$ is of multiplicity-free type. 
\end{example}

\section{Proof of Theorem~\ref{thm:main}}

Let $Y$ be a multiplicity-free subvariety of $(\mathbb{P}^1)^n$. 
By Proposition~\ref{prop:geomintegral}, $Y$ is geometrically integral, so the base change of $Y$ to the algebraic closure of $k$ is a multiplicity-free subvariety of $(\mathbb{P}^1)^n$. We may therefore assume that $k$ is algebraically closed. 

Recall that by Proposition~\ref{prop:CM}, $Y$ is normal and Cohen--Macaulay. For $i \in \{1, \dotsc, n\}$, let $p_i \colon (\mathbb{P}^1)^n \to (\mathbb{P}^1)^{n-1}$ be the coordinate projection that forgets the $i$th factor. Note that the fibers of the restriction of $p_i$ to $Y$ have dimension at most $1$, so for any coherent sheaf $\mathcal{F}$ on $Y$, we have $R^j p_{i*} \mathcal{F} = 0$ for $j > 1$. 

\begin{lemma}\label{lem:push}
For each $i$, we have $p_{i*} \mathcal{O}_Y = \mathcal{O}_{p_i(Y)}$ and $R^j p_{i*} \mathcal{O}_Y = 0$ for $j > 0$. 
\end{lemma}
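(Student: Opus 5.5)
Write $q = p_i|_Y \colon Y \to Z := p_i(Y)$. By Proposition~\ref{prop:image}, $Z$ is multiplicity-free, so by Proposition~\ref{prop:CM} it is normal. I will treat separately the two cases coming from Proposition~\ref{prop:projectionproduct} and the remark after Proposition~\ref{prop:image}.

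\emph{Case 1: $q$ is not generically finite.} By Proposition~\ref{prop:projectionproduct}, $Y \cong Z \times \mathbb{P}^1$ and $q$ is the first projection. By flat base change, $q_*\mathcal{O}_Y = \mathcal{O}_Z \otimes H^0(\mathbb{P}^1,\mathcal{O}) = \mathcal{O}_Z$. The same base change gives $R^j q_*\mathcal{O}_Y = \mathcal{O}_Z \otimes H^j(\mathbb{P}^1,\mathcal{O}) = 0$ for $j>0$.

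\emph{Case 2: $q$ is generically finite.} Then $q$ is birational and projective. Since $Z$ is normal and $q$ is proper and birational, Zariski's main theorem gives $q_*\mathcal{O}_Y = \mathcal{O}_Z$. Only $R^1 q_*\mathcal{O}_Y$ remains, since $R^j$ vanishes for $j>1$ as noted above.

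To show $R^1 q_*\mathcal{O}_Y = 0$, I would use the theorem on formal functions, or cohomology and base change, on the fibers. Fix $z \in Z$. The fiber $q^{-1}(z)$ is the scheme-theoretic fiber of the coordinate projection $(\mathbb{P}^1)^n \to (\mathbb{P}^1)^{n-1}$ restricted to $Y$. By Proposition~\ref{prop:slice} it is reduced, and each of its components is multiplicity-free. The fiber lies in $\{\text{pt}\}\times\mathbb{P}^1$, so it is either a reduced finite set of points or all of $\mathbb{P}^1$, and in both cases $H^1(q^{-1}(z),\mathcal{O}) = 0$. Since $R^2 q_*$ vanishes for any coherent sheaf, $R^1 q_*$ commutes with base change (it is right exact in the sheaf). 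Equivalently, $H^1$ of a fiber of dimension $\le 1$ is right exact in the coefficient sheaf. This gives $R^1q_*\mathcal{O}_Y \otimes k(z) \to H^1(q^{-1}(z),\mathcal{O}_{q^{-1}(z)})$ surjective, and in fact an isomorphism, because top-degree cohomology commutes with base change. Hence $R^1q_*\mathcal{O}_Y \otimes k(z) = 0$ for every $z$, and Nakayama's lemma gives $R^1 q_*\mathcal{O}_Y = 0$.

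The main obstacle is making this base-change step rigorous when $q$ is not flat. The surjectivity (indeed isomorphism) of $R^1q_*\mathcal{F}\otimes k(z) \to H^1(X_z,\mathcal{F}_z)$ when the fibers have dimension $\le 1$ needs a standard argument. I would take $\mathcal{F} \to \mathcal{F}\otimes \mathcal{O}_Y/\mathfrak{m}_z\mathcal{O}_Y$, which is surjective, so $R^1$ of it is surjective by the vanishing of $R^2$ on the kernel. I would then identify $R^1 q_*(\mathcal{F}/\mathfrak{m}_z\mathcal{F})$ with $H^1$ of the scheme-theoretic fiber, using affine-locality on $Z$. Reducedness of the fiber, from Proposition~\ref{prop:slice}, is what ensures that this $H^1$ vanishes. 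Without reducedness, a nonreduced structure supported on a $\mathbb{P}^1$ could have nonzero $H^1$.
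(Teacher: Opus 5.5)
Your proof is correct. The product case and the use of Zariski's main theorem (via normality of $Z$) for $q_*\mathcal{O}_Y=\mathcal{O}_Z$ match the paper. You prove $R^1q_*\mathcal{O}_Y=0$ by a different route.

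\textbf{Comparison.} The paper never looks at fibers. It pushes forward $0\to\mathcal{I}\to\mathcal{O}_{(\mathbb{P}^1)^n}\to\mathcal{O}_Y\to 0$ along $p_i$ on all of $(\mathbb{P}^1)^n$. It then reads off $R^1p_{i*}\mathcal{O}_Y=0$ from $R^1p_{i*}\mathcal{O}_{(\mathbb{P}^1)^n}=0$ and $R^2p_{i*}\mathcal{I}=0$. This one-line argument shows that the vanishing holds for every closed subscheme of $(\mathbb{P}^1)^n$. Multiplicity-freeness is used only for $p_{i*}\mathcal{O}_Y=\mathcal{O}_Z$.

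Your fiberwise argument (top-degree base change plus Nakayama) is a more general criterion. It applies to any proper map with fibers of dimension at most one whose fibers have $H^1(\mathcal{O})=0$, with no ambient $\mathbb{P}^1$-bundle required. Here, though, it is heavier than needed, and it also uses no input from multiplicity-freeness. The scheme-theoretic fiber $q^{-1}(z)$ is a closed subscheme of $\mathbb{P}^1_{k(z)}$, so it is either all of $\mathbb{P}^1_{k(z)}$ or finite. Hence $H^1(q^{-1}(z),\mathcal{O}_{q^{-1}(z)})=0$ automatically.

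\textbf{Two small corrections.} First, Proposition~\ref{prop:slice} is superfluous, and your closing claim that reducedness is what makes $H^1$ vanish is wrong. A nonzero ideal sheaf on $\mathbb{P}^1_{k(z)}$ cuts out a finite subscheme. So no nonreduced structure on the whole line can occur inside $\mathbb{P}^1_{k(z)}$.

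Second, the step you actually spell out gives only a surjection $R^1q_*\mathcal{O}_Y\otimes k(z)\to H^1(q^{-1}(z),\mathcal{O}_{q^{-1}(z)})$. That points the wrong way for Nakayama; you need the isomorphism. The isomorphism does hold. Over an affine $\operatorname{Spec}A\subseteq Z$, consider the functor $M\mapsto H^1(q^{-1}(\operatorname{Spec}A),\mathcal{O}_Y\otimes_A M)$ on finitely generated $A$-modules. It is $A$-linear and right exact, because all $H^2$ vanish. It is therefore isomorphic to $H^1(q^{-1}(\operatorname{Spec}A),\mathcal{O}_Y)\otimes_A M$, which gives what you need.
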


Taking into account \cite[Theorem 1]{BrionMultiplicity}, Lemma~\ref{lem:push} is a special case of \cite[Proposition 2.9]{EFL}. We give a simple direct argument. 

\begin{proof}
By Proposition~\ref{prop:image}, $p_i(Y)$ is a multiplicity-free subvariety of $(\mathbb{P}^1)^{n-1}$, and so it is normal by Proposition~\ref{prop:CM}. If the restriction of $p_i$ to $Y$ is not birational, then the restriction is the projection $p_i(Y) \times \mathbb{P}^1 \to p_i(Y)$ by Proposition~\ref{prop:projectionproduct}, giving the result. Otherwise, the restriction of $p_i$ is birational, so Zariski's main theorem \cite[Exercise 28.4.B]{VakilNotes} implies that $p_{i*} \mathcal{O}_Y = \mathcal{O}_{p_i(Y)}$. Let $\mathcal{I}$ be the ideal sheaf of $Y$ in $(\mathbb{P}^1)^n$, so we have an exact sequence $0 \to \mathcal{I} \to \mathcal{O}_{(\mathbb{P}^1)^{n}} \to \mathcal{O}_Y \to 0$. The long exact sequence induced by $p_{i*}$ looks like
$$\dotsb \to R^1 p_{i*} \mathcal{I} \to R^1p_{i*} \mathcal{O}_{(\mathbb{P}^1)^n} \to R^1p_{i*} \mathcal{O}_Y \to R^2p_{i*} \mathcal{I} \to \dotsb.$$
As $R^1p_{i*} \mathcal{O}_{(\mathbb{P}^1)^n} = R^2p_{i*} \mathcal{I} = 0$, this implies that $R^1p_{i*} \mathcal{O}_Y = 0$. 
\end{proof}

Recall that $Y$ and $p_i(Y)$ are Cohen--Macaulay by Proposition~\ref{prop:CM}. Applying Grothendieck duality to Lemma~\ref{lem:push} gives the following result. 

\begin{corollary}\label{prop:pushomega}
For any $i \in \{1, \dotsc, n\}$ such that $p_i$ is birational, we have $Rp_{i*} \omega_Y = \omega_{p_i(Y)}$. 
\end{corollary}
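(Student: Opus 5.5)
We will deduce the statement from Lemma~\ref{lem:push} by Grothendieck duality for the proper morphism $f := p_i|_Y \colon Y \to Z := p_i(Y)$. Fix $i$ with $f$ birational. By Proposition~\ref{prop:image}, $Z$ is multiplicity-free, so by Proposition~\ref{prop:CM} both $Y$ and $Z$ are Cohen--Macaulay (and normal). Their dualizing complexes are therefore concentrated in a single degree: $\omega_Y^\bullet = \omega_Y[d]$ and $\omega_Z^\bullet = \omega_Z[d]$, where $d = \dim Y = \dim Z$ because $f$ is birational. We normalize the dualizing complexes compatibly, as the upper-shriek pullback of the dualizing complex of $\operatorname{Spec} k$. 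Then $f^! \omega_Z^\bullet = \omega_Y^\bullet$, since $f^!$ commutes with composition of structure morphisms.

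Grothendieck duality for the proper map $f$ gives
\[
Rf_* R\mathcal{H}om_Y(\mathcal{O}_Y, f^!\omega_Z^\bullet) \cong R\mathcal{H}om_Z(Rf_*\mathcal{O}_Y, \omega_Z^\bullet).
\]
By Lemma~\ref{lem:push}, $Rf_*\mathcal{O}_Y = \mathcal{O}_Z$, so the right-hand side is $\omega_Z^\bullet$. The left-hand side is $Rf_*\omega_Y^\bullet$. Substituting the Cohen--Macaulay descriptions gives $Rf_*\omega_Y[d] \cong \omega_Z[d]$, and shifting by $-d$ yields $Rf_*\omega_Y \cong \omega_Z$. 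Finally, since $f$ is the restriction of $p_i$ to the closed subvariety $Y$, and pushing forward along the closed immersion $Z \hookrightarrow (\mathbb{P}^1)^{n-1}$ is exact, this is the claimed identity $Rp_{i*}\omega_Y = \omega_{p_i(Y)}$. In particular $R^1 p_{i*}\omega_Y = 0$ and $p_{i*}\omega_Y = \omega_{p_i(Y)}$.

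The only delicate points are bookkeeping. First, the shifts must match, which is exactly where birationality enters: it forces $\dim Y = \dim Z$. Second, the isomorphism should be the natural trace map, which matters if we later want the surjectivity of $\pi_*\omega_X \to \omega_Y$ to be compatible under composition. For the second point we would note that duality identifies the isomorphism above with the trace $Rf_*\omega_Y \to \omega_Z$, which restricts to the identity over the open set where $f$ is an isomorphism. I expect no real obstacle beyond recording these compatibilities.
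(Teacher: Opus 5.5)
Your proposal is correct and is essentially the paper's argument: the paper says only that Grothendieck duality applied to Lemma~\ref{lem:push}, together with the Cohen--Macaulayness of $Y$ and $p_i(Y)$, gives the corollary, and you have written out exactly that computation, with birationality ensuring $\dim Y = \dim p_i(Y)$ so the shifts agree. Your remark that the resulting isomorphism is the trace map is worth keeping, because the main proof uses this compatibility when it treats $p_*\tau(\omega_Y)$ as a subsheaf of $\omega_{p_i(Y)}$ containing $\tau(\omega_{p_i(Y)})$.
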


\begin{lemma}\label{lem:pushvanish}
Assume that $Y$ is not equal to $(\mathbb{P}^1)^n$. Let $\mathcal{F}$ be a coherent sheaf on $Y$, and assume that $p_{i*} \mathcal{F} = 0$ for all $i \in \{1, \dotsc, n\}$. Then $\mathcal{F} = 0$. 
\end{lemma}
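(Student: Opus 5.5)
The plan is to argue by contradiction through the support of $\mathcal{F}$, using that the restrictions $p_i|_Y$ are proper. Suppose $\mathcal{F} \neq 0$ and let $Z = \operatorname{Supp} \mathcal{F}$, a nonempty closed subset of $Y$. Since $\mathcal{F}$ is coherent, it is annihilated by some power $\mathcal{J}^m$ of the ideal sheaf $\mathcal{J}$ of $Z$ (with its reduced structure). So $\mathcal{F}$ is the pushforward of a coherent sheaf on the closed subscheme $Z' = V(\mathcal{J}^m)$, whose underlying space is $Z$. The restriction $q_i \colon Z' \to (\mathbb{P}^1)^{n-1}$ of $p_i$ is proper, and $p_{i*}\mathcal{F} = q_{i*}\mathcal{F}$.

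\textbf{Key step.} For each $i$, every nonempty fiber of $p_i|_Z$ has dimension $1$.

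Suppose instead that some closed point $w \in p_i(Z)$ has finite fiber $p_i^{-1}(w) \cap Z$. The proof would go as follows.
\begin{enumerate}
\item By upper semicontinuity of fiber dimension on the source (Chevalley), the locus of points of $Z$ that are isolated in their fiber is open. Its complement $C$ is closed, and $p_i(C)$ is closed because $q_i$ is proper.
\item Then $U = (\mathbb{P}^1)^{n-1} \setminus p_i(C)$ is an open set containing $w$.
\item Over $U$ the map $q_i$ is proper and quasi-finite, hence finite.
\item For a finite morphism, the pushforward of a nonzero coherent sheaf is nonzero. Since $\mathcal{F}$ is nonzero on $p_i^{-1}(U)$, which contains the fiber over $w$, we get $(p_{i*}\mathcal{F})|_U \neq 0$. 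This contradicts the hypothesis.
\end{enumerate}
I expect this to be the only step needing care. The main point is that properness upgrades "some finite fiber" to "finite over a neighborhood," which is what makes the pushforward visible.

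\textbf{Conclusion.} The fibers of $p_i$ on $(\mathbb{P}^1)^n$ are projective lines, so a $1$-dimensional closed subset of such a fiber is the whole line. Hence for every closed point $z \in Z$ and every $i$, the line through $z$ obtained by varying the $i$th coordinate lies in $Z$. Since $k$ is algebraically closed, start from any closed point of $Z$ and change coordinates one at a time; this reaches every closed point of $(\mathbb{P}^1)^n$. So $Z$ contains all closed points, and therefore $Z = (\mathbb{P}^1)^n$. But $Z \subseteq Y \subsetneq (\mathbb{P}^1)^n$, a contradiction. Hence $\mathcal{F} = 0$.
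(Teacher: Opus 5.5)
Your proof is correct, but it follows a somewhat different route from the paper's. The paper picks an irreducible component $W$ of $\operatorname{Supp}\mathcal{F}$ and passes to the nonzero subsheaf $\mathcal{F}'$ of sections supported on $W$, so $p_{i*}\mathcal{F}' \subseteq p_{i*}\mathcal{F} = 0$ by left exactness. It then uses that $W \neq (\mathbb{P}^1)^n$ forces some $p_i|_W$ to be generically finite, so $p_{i*}\mathcal{F}'$ is nonzero at the generic point of $p_i(W)$.

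You instead work with the whole support $Z$ at a closed point $w$ whose fiber in $Z$ is finite. Openness of the quasi-finite locus, together with the fact that proper quasi-finite morphisms are finite, makes $\mathcal{F}$ finite over a neighborhood of $w$. So you never need to isolate a component or pass to a subsheaf. The output is a clean statement for each $i$: $p_{i*}\mathcal{F} = 0$ forces $Z$ to be a union of fibers of $p_i$.

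Your final step says a closed subset saturated for every coordinate projection is all of $(\mathbb{P}^1)^n$. This is essentially the contrapositive of a fact the paper asserts without proof: some coordinate projection is generically finite on any proper subvariety, which also follows from Proposition~\ref{prop:projectionproduct}.

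So the paper's version is shorter and needs only generic finiteness. Yours invokes Chevalley's semicontinuity and the finiteness criterion, but it is more self-contained.
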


\begin{proof}
Suppose that $\mathcal{F}$ is nonzero, and let $W$ be an irreducible component of the support of $\mathcal{F}$. Let $\mathcal{F}'$ be the subsheaf of $\mathcal{F}$ consisting of sections which are supported on $W$. It suffices to show that $\mathcal{F}'$ is zero. 

Because $W \not= (\mathbb{P}^1)^n$, there is some $i$ such that the restriction of $p_i$ to $W$ is generically finite. This implies that the support of $p_{i*} \mathcal{F}'$ is equal to $p_i(W)$. But the assumption that $p_{i*} \mathcal{F} = 0$ implies that $p_{i*} \mathcal{F}' = 0$, so $\mathcal{F}' = 0$. 
\end{proof}

Let $Y$ be an (integral) variety over an algebraically closed field of positive characteristic. An \emph{alteration} of $Y$ is a variety $X$ with a proper surjective generically finite map $X \to Y$. 
Recall that there is a subsheaf $\tau(\omega_Y)$ of $\omega_Y$ called the \emph{test module} with the property that $Y$ has $F$-rational singularities if and only if $\tau(\omega_Y) = \omega_Y$. This module, which was introduced in \cite{SmithTestIdeals}, is defined as the smallest nonzero subsheaf $\mathcal{F}$ of $\omega_Y$ such that the image of $\mathcal{F}$ under the trace of Frobenius is contained in $\mathcal{F}$. 
For any alteration $\pi \colon X \to Y$, the image of $\pi_* \omega_X$ in $\omega_Y$ satisfies this property, so the test module is contained in the image of $\pi_* \omega_X$. See \cite[Section 2.5]{BST} for a discussion of test modules. 

By \cite[Theorem 3.2]{BST}, there is an alteration $\pi \colon X \to Y$ such that $\tau(\omega_Y)$ is the image of $\pi_*\omega_X$ in $\omega_Y$. It follows that this also holds for any alteration which factors through $X$.

\begin{proof}[Proof of Theorem~\ref{thm:main}]
By induction, we may assume that $p_i(Y)$ has $F$-rational singularities for any $i \in \{1, \dotsc, n\}$. If some map $p_i$ is not birational, then $Y$ is isomorphic to $\mathbb{P}^1 \times p_i(Y)$ by Proposition~\ref{prop:projectionproduct}, so we may assume that $p_i$ is birational for each $i$. 

Choose an alteration $\pi \colon X \to Y$ such that $\tau(\omega_Y)$ is the image of $\pi_* \omega_X$ in $\omega_Y$. Let $K$ be the kernel of the map $\pi_* \omega_X \to \omega_Y$, and let $C = \omega_Y/\tau(\omega_Y)$. We need to show that $C = 0$. By Lemma~\ref{lem:pushvanish}, it suffices to show that $p_{i*}C = 0$ for each $i$. We have short exact sequences
\begin{equation}\label{eq:cokernel}
0 \to \tau(\omega_Y) \to \omega_Y \to C \to 0, \text{ and }
\end{equation}
\begin{equation}\label{eq:kernel}
0 \to K \to \pi_* \omega_X \to \tau(\omega_Y) \to 0.
\end{equation}
Set $p = p_i$ for some $i$, and let $Z = p(Y)$. Because the fibers of $p$ have dimension at most $1$, $R^jp_*$ vanishes for $j \ge 2$. By Corollary~\ref{prop:pushomega}, $p_* \omega_Y = \omega_Z$ and $R^1p_* \omega_Y = 0$. 
This implies that $p_* \tau(\omega_Y)$, thought of as a subsheaf of $\omega_Z$, contains the test module $\tau(\omega_Z)$. It follows that $p_* \tau(\omega_Y)$ is equal to $\omega_Z$ because $Z$ has $F$-rational singularities. Applying $p_*$ to \eqref{eq:cokernel}, we have an exact sequence
$$0 \to \omega_Z \xrightarrow{\sim} \omega_Z  \xrightarrow{0} p_*C \to R^1p_* \tau(\omega_Y) \to 0.$$
To prove the theorem, it therefore suffices to show that $R^1p_* \tau(\omega_Y) = 0$. Pushing forward \eqref{eq:kernel}, we have an exact sequence
$$0 \to p_*K \to p_* \pi_* \omega_X \to \omega_Z \to R^1p_* K \to R^1p_* \pi_* \omega_X \to R^1 p_* \tau(\omega_Y) \to 0.$$
By \cite[Lemma 5.2]{BST}, we can find an alteration $\rho \colon U \to X$ such that the map $R^1 (p \circ \pi \circ \rho)_* \omega_U \to R^1 (p \circ \pi)_* \omega_X$ is $0$. We apply the Leray spectral sequence to the composition of $\pi \circ \rho$ with $p$. Because $R^j p_*$ vanishes for $j > 1$, the $E_2$ page of the associated Leray spectral sequence looks like the following:
\begin{center}
\begin{tikzpicture}
  \matrix (m) [matrix of math nodes,
    nodes in empty cells,nodes={minimum width=5ex,
    minimum height=20,outer sep=2pt},
    column sep=1,row sep=1pt]{
          \vdots     &  \vdots &  \vdots  & \hdots & \vdots & \\
          2      & p_{*} R^2 (\pi \circ \rho)_* \omega_U                  & R^1 p_{*} R^2 (\pi \circ \rho)_* \omega_U &   0             & \hdots & \\
          1      & p_{*} R^1 (\pi \circ \rho)_* \omega_U                  & R^1 p_{*} R^1 (\pi \circ \rho)_* \omega_U &   0             & \hdots & \\
          0     &  p_{*} (\pi \circ \rho)_* \omega_U  & R^1p_{*} (\pi \circ \rho)_* \omega_U &  0  & \hdots & \\
    \quad\strut &   0  &  1  &  2  & \hdots  &\strut \\};
\draw[thick] (m-1-1.east) -- (m-5-1.east) ;
\draw[thick] (m-5-1.north) -- (m-5-6.north) ;
\end{tikzpicture}
\end{center}
This spectral sequence degenerates at $E_2$ because it is supported in two columns, so the edge map $R^1p_* (\pi \circ \rho)_* \omega_U \to R^1(p \circ \pi \circ \rho)_* \omega_U$ is injective. Similarly, the map $R^1 p_* \pi_* \omega_X \to R^1(p \circ \pi)_* \omega_X$ is injective. It follows that the map $R^1p_* (\pi \circ \rho)_* \omega_U \to R^1p_* \pi_* \omega_X$ is $0$. 

Because the map $U \to Y$ factors through $X$, the image of $(\pi \circ \rho)_* \omega_U$ in $\omega_Y$ is $\tau(\omega_Y)$. Replacing $X$ by $U$ and pushing forward the sequence analogous to \eqref{eq:kernel}, we see that the map $R^1p_* (\pi \circ \rho)_* \omega_U \to R^1 p_* \tau(\omega_Y)$ is surjective. But this map factors through $R^1p_* \pi_* \omega_X$, so the map is $0$, proving the result. 
\end{proof}

\begin{remark}\label{rem:normal}
The argument used to prove Theorem~\ref{thm:main} can easily be adapted to prove that multiplicity-free varieties are normal and, if $k$ has characteristic $0$, have rational singularities. For example, if $\nu \colon Y^{\nu} \to Y$ is the normalization of $Y$ and $p(Y)$ is inductively assumed to be normal, then applying $p_*$ to the sequence $0 \to \mathcal{O}_{Y} \to \nu_* \mathcal{O}_{Y^{\nu}} \to \nu_* \mathcal{O}_{Y^{\nu}}/\mathcal{O}_Y \to 0$ and arguing as in the proof of Theorem~\ref{thm:main} shows that $\nu_* \mathcal{O}_{Y^{\nu}}/\mathcal{O}_Y  = 0$. 
\end{remark}

\begin{remark}\label{rem:rational}
Several of the above arguments are analogous to arguments that Brion uses to prove Proposition~\ref{prop:CM}, although the argument is set up in a different way. For example, Lemma~\ref{lem:push} is analogous to \cite[Lemma 3]{BrionMultiplicity} and Lemma~\ref{lem:pushvanish} is analogous to \cite[Lemma 8]{BrionOrbitClosure}. 
\end{remark}

\subsection{Multiplicity-free subvarieties of a flag variety}\label{ssec:GmodP}
Let $G$ be a reductive group, let $P$ be a reduced parabolic in $G$, and let $B$ be a Borel subgroup contained in $P$. Recall that the Chow homology of $G/P$ has a basis consisting of classes of Schubert varieties, and that a subvariety $Y$ of $G/P$ is \emph{multiplicity-free} if, when the fundamental class of $Y$ is expressed in this basis, all coefficients are equal to $0$ or $1$. In \cite{BrionOrbitClosure,BrionMultiplicity}, Brion showed that multiplicity-free subvarieties of $G/P$ are normal and Cohen--Macaulay, and, if $k$ has characteristic $0$, they have rational singularities. We now sketch a proof of the following strengthening of Theorem~\ref{thm:main}.

\begin{theorem}\label{thm:GmodP}
Let $Y$ be a multiplicity-free integral subvariety of $G/P$ over a field $k$ of positive characteristic. Then $Y$ has geometrically $F$-rational singularities. 
\end{theorem}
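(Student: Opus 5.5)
We follow the proof of Theorem~\ref{thm:main}, replacing the coordinate projections $p_i \colon (\mathbb{P}^1)^n \to (\mathbb{P}^1)^{n-1}$ by the natural maps $G/P \to G/Q$ and, more usefully, by Brion's ``$\mathbb{P}^1$-fibration'' construction. First we reduce to $k$ algebraically closed; the analogue of Proposition~\ref{prop:geomintegral} holds by the same argument, because Schubert classes are defined over $k$ and Galois conjugate components have equal classes. We may also pull back along $G/B \to G/P$. This is a smooth morphism with fibers $P/B$, the preimage of a multiplicity-free subvariety is again multiplicity-free in $G/B$ (Schubert classes pull back to Schubert classes), and $F$-rationality descends along smooth surjections. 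So we may assume $P = B$.

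For a simple root $\alpha$, let $P_\alpha \supset B$ be the minimal parabolic. The map $q_\alpha \colon G/B \to G/P_\alpha$ is a $\mathbb{P}^1$-bundle and plays the role of $p_i$. Consider the $B$-stable variety $P_\alpha Y = q_\alpha^{-1}(q_\alpha(Y))$. The natural replacement for ``$p_i(Y)$'' is $q_\alpha(Y)$, but $G/P_\alpha$ is a different flag variety. We therefore either run an induction over all partial flag varieties $G/Q$, or, following Brion, induct on $\dim Y$ and codimension using the operators $Y \mapsto P_\alpha Y$. Brion's results give the needed inputs:
\begin{itemize}
\item[(a)] If $Y$ is multiplicity-free, then so is $q_\alpha(Y)$.
\item[(b)] Either $q_\alpha|_Y$ is birational onto its image, or $Y = q_\alpha^{-1}q_\alpha(Y)$.
\item[(c)] The analogue of Lemma~\ref{lem:push} holds: $Rq_{\alpha*}\mathcal{O}_Y = \mathcal{O}_{q_\alpha(Y)}$.
\end{itemize}
For (c), the proof of Lemma~\ref{lem:push} goes through verbatim, since $q_\alpha$ is a $\mathbb{P}^1$-bundle, so $R^1q_{\alpha*}\mathcal{O} = 0$ and $R^2q_{\alpha*} = 0$. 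Grothendieck duality then gives $Rq_{\alpha*}\omega_Y = \omega_{q_\alpha(Y)}$ in the birational case. Since the fibers have dimension at most one, $R^jq_{\alpha*} = 0$ for $j \ge 2$.

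Next we need the analogue of Lemma~\ref{lem:pushvanish}. If $Y \ne G/Q$ and $\mathcal{F}$ is a coherent sheaf on $Y$ with $q_{\alpha*}\mathcal{F} = 0$ for all simple $\alpha$, then $\mathcal{F} = 0$. This reduces to the claim that a proper closed subvariety $W \subsetneq G/B$ has some $\alpha$ with $q_\alpha|_W$ generically finite. Otherwise $W$ would be stable, generically and hence everywhere, under every $P_\alpha$ in the sense that $W = q_\alpha^{-1}q_\alpha(W)$ for all $\alpha$. Then $W$ would be a union of fibers of $G/B \to G/P_\alpha$ for all $\alpha$, and since the $P_\alpha$ generate $G$, $W$ would be $G$-stable and hence all of $G/B$. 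This is the analogue of \cite[Lemma 8]{BrionOrbitClosure}.

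With these ingredients, the main argument is copied word for word. We induct on $\dim G/B$ plus $\operatorname{codim} Y$, or on $\dim Y$ across all flag varieties $G/Q$. In the non-birational case, $Y$ is a $\mathbb{P}^1$-bundle over $q_\alpha(Y)$, so it is $F$-rational by induction, since $F$-rationality is preserved by smooth morphisms. In the birational case, we take the alteration from \cite[Theorem 3.2]{BST}, apply \cite[Lemma 5.2]{BST} to kill $R^1$, and use the two-column Leray spectral sequence to obtain $q_{\alpha*}C = 0$ for every $\alpha$, where $C = \omega_Y/\tau(\omega_Y)$. The analogue of Lemma~\ref{lem:pushvanish} then gives $C = 0$.

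The main obstacle is making the induction well founded, because $q_\alpha(Y)$ lives in $G/P_\alpha$ rather than in $G/B$. We would handle this by proving the statement simultaneously for all $G/Q$. In that setting we use the maps $G/Q \to G/Q'$ for $Q \subset Q'$ with $Q'/Q \cong \mathbb{P}^1$ when they exist, and otherwise pull back to $G/B$. The induction is on $\dim Y$, and we check that both pulling back and $q_\alpha$ keep us within the class of multiplicity-free varieties, which is exactly Brion's input.
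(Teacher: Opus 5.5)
\textbf{There is a genuine gap in how you handle the non-birational directions.} Your preliminary steps are fine: the reduction to $G/B$, facts (a)--(c), the duality statement, and your analogue of Lemma~\ref{lem:pushvanish}. The problem is how you feed them into the argument of Theorem~\ref{thm:main}.

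You split into two cases. If some $q_\alpha|_Y$ is not birational, then $Y$ is a $\mathbb{P}^1$-bundle over $q_\alpha(Y)$ and ``is $F$-rational by induction.'' Otherwise every $q_\alpha|_Y$ is birational, and you get $q_{\alpha*}C=0$ for all $\alpha$.

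In $(\mathbb{P}^1)^n$ the first case is harmless, because $p_i(Y)$ lies in $(\mathbb{P}^1)^{n-1}$, a smaller space of the same type. In $G/B$ it is circular. $G/P_\alpha$ usually has no $\mathbb{P}^1$-fibration to a further flag variety (e.g.\ $SL_3/P_\alpha\cong\mathbb{P}^2$), so you are forced to pull $q_\alpha(Y)$ back to $G/B$, and $q_\alpha^{-1}q_\alpha(Y)=Y$.

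Your proposed inductions do not repair this:
\begin{itemize}
\item Induction on codimension fails in the non-birational case, because codimension does not drop there.
\item Induction on $\dim Y$ fails in the birational case, because $\dim q_\alpha(Y)=\dim Y$. So it does not give you the $F$-rationality of $q_\alpha(Y)$ that the birational argument needs.
\end{itemize}
Nor can you restrict to the all-birational case, since mixed behaviour is the norm. Already the Schubert curve $X_{s_1}\subset SL_3/B$ is a fibre of $q_{\alpha_1}$ but maps birationally under $q_{\alpha_2}$. Your BST/Leray argument only yields $q_{\alpha*}C=0$ for those $\alpha$ with $q_\alpha|_Y$ birational. So the hypothesis of your vanishing lemma (vanishing for \emph{every} $\alpha$) is never verified.

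\textbf{The fix is to use one well-chosen $\alpha$, inducting only on codimension in $G/B$.}
\begin{enumerate}
\item Suppose $C\neq 0$, and let $U$ be an irreducible component of $\mathrm{Supp}\,C$. Choose $\alpha$ with $q_\alpha|_U$ generically finite; your proof of the lemma supplies one, since $U\neq G/B$.
\item Suppose $q_\alpha|_Y$ were not birational. Then $Y=q_\alpha^{-1}q_\alpha(Y)$ would be smooth over $q_\alpha(Y)$. Since the non-$F$-rational locus is compatible with smooth morphisms, $\mathrm{Supp}\,C$ would be a union of $q_\alpha$-fibres, and hence so would its component $U$. This contradicts the choice of $\alpha$, so $q_\alpha|_Y$ is birational.
\item $q_\alpha(Y)$ is $F$-rational by induction. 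Its preimage $q_\alpha^{-1}q_\alpha(Y)\subset G/B$ is multiplicity-free of codimension one less, and it is smooth over $q_\alpha(Y)$.
\item Your birational-case argument then gives $q_{\alpha*}C=0$. But generic finiteness of $q_\alpha$ on $U$ forces $q_{\alpha*}C\neq 0$, a contradiction.
\end{enumerate}
This is exactly the paper's proof. The paper phrases it through the correspondence $p_i^{-1}(Y)\to P_i\cdot Y$ inside $G/B$, which is the base change of your $q_\alpha|_Y$ along $q_\alpha^{-1}q_\alpha(Y)\to q_\alpha(Y)$; this keeps everything in $G/B$.

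Two minor slips, neither of which affects the argument. First, $q_\alpha^{-1}q_\alpha(Y)$ is not $B$-stable in general. Second, in your vanishing lemma it is the preimage of $W$ in $G$ that becomes stable under right multiplication by $G$, not $W$ itself that becomes $G$-stable.
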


We reiterate that most examples that we are aware of multiplicity-free subvarieties of a $G/P$ which is not a product of projective spaces are known to satisfy the stronger property of being strongly $F$-regular. 

\begin{proof}
As in the proof of Theorem~\ref{thm:main}, we may assume that $k$ is algebraically closed. 
The map $G/B \to G/P$ is smooth with irreducible fibers, so the preimage of $Y$ in $G/B$ is irreducible. The preimage of a Schubert variety in $G/P$ is a Schubert variety in $G/B$, so the preimage of $Y$ in $G/B$ is multiplicity-free. As the preimage of $Y$ in $G/B$ is smooth over $Y$, it suffices to prove that the preimage of $Y$ in $G/B$ has $F$-rational singularities; see \cite[Proposition A.5]{DattMurayama} and \cite[(6) on pg. 440]{Velez}.  We may therefore assume that $P = B$. 

Let $P_i$ be a minimal parabolic properly containing $B$. The map $G/B \to G/P_i$ is a $\mathbb{P}^1$-bundle, and we have a Cartesian diagram
\begin{center}
\begin{tikzcd}
G/B \times_{G/P_i} G/B \arrow[r, "p_i"] \arrow[d, "p_i"]
& G/B \arrow[d] \\
G/B \arrow[r]
&G/P_i 
\end{tikzcd}.
\end{center}
Note that $p_i(p_i^{-1}Y)$ is equal to $P_i \cdot Y$, the orbit of $Y$ under $P_i$. We have $\dim P_i \cdot Y \in \{\dim Y, \dim Y + 1\}$, and we say that $P_i$ \emph{raises} $Y$ if $\dim P_i \cdot Y = \dim Y + 1$. If $P_i$ raises $Y$, then the homology class of $P_i \cdot Y$ is equal to $p_{i*} p_i^* [Y]/d$, where $d$ is the degree of the generically finite map $p_i^{-1}(Y) \to P_i \cdot Y$. 

If $W$ is a Schubert variety in $G/B$, then $P_i \cdot W$ is also a Schubert variety. If $V$ is a different Schubert variety of the same dimension, then $P_i \cdot V$ is different from $P_i \cdot W$ as long as $P_i$ raises both $V$ and $W$. This implies that if $Y$ is multiplicity-free, then so is $P_i \cdot Y$, so the map $p_i \colon p_i^{-1}(Y) \to P_i \cdot Y$ is birational if $P_i$ raises $Y$. The fibers of this map are at most $1$-dimensional. By an argument similar to the proof of Lemma~\ref{lem:push}, using that $P_i \cdot Y$ is normal by \cite{BrionMultiplicity}, we have $p_{i*} \mathcal{O}_{p_i^{-1}(Y)} = \mathcal{O}_{P_i \cdot Y}$ and $R^j p_{i*} \mathcal{O}_{p_i^{-1}(Y)} = 0$ for $j > 0$. 

For any nonempty $X \subsetneq G/B$, there is some minimal parabolic $P_i$ which raises $X$. Indeed, it suffices to check this for Schubert varieties, as whether $P_i$ raises $X$ can be checked in terms of the fundamental class of $X$. 

By induction on the codimension, we may assume that for each parabolic $P_i$ that raises $Y$, $P_i \cdot Y$ has $F$-rational singularities. Let $\tau(\omega_Y)$ be the test module of $Y$. If $Y$ is not $F$-rational, then choose an irreducible component $U$ of the support of $\omega_Y/\tau(\omega_Y)$. If a minimal parabolic does not raise $Y$, then the support of $\omega_Y/\tau(\omega_Y)$ is invariant under the action of that parabolic, so it does not raise $U$. Therefore, any minimal parabolic which raises $U$ also raises $Y$. Choose such a minimal parabolic $P_i$. 

As $p_i^{-1}(Y)$ is a $\mathbb{P}^1$-bundle over $Y$, the support of $\omega_{p_i^{-1}(Y)}/\tau(\omega_{p_i^{-1}(Y)})$ is the inverse image of the support of $\omega_Y/\tau(\omega_Y)$. In particular, $p_i^{-1}(U)$ is an irreducible component of the support of $\omega_{p_i^{-1}(Y)}/\tau(\omega_{p_i^{-1}(Y)})$. As the map $p_i^{-1}(U) \to P_i \cdot Y$ is generically finite onto its image, this implies that $p_{i*}\omega_{p_i^{-1}(Y)}/\tau(\omega_{p_i^{-1}(Y)})$ is nonzero. 

To obtain a contradiction, we run the rest of the argument in the proof of Theorem~\ref{thm:main}. In brief, choose an alteration $\pi \colon X \to p_i^{-1}(Y)$ which has the property that $\tau(\omega_{p_i^{-1}(Y)})$ is the image of $\pi_* \omega_X$ in $\omega_{p_i^{-1}(Y)}$. Pushing forward the analogues of \eqref{eq:cokernel} and \eqref{eq:kernel} show that $p_{i*} \omega_{p_i^{-1}(Y)}/\tau(\omega_{p_i^{-1}(Y)})$ is isomorphic to $R^1 p_{i*} \tau(\omega_{p_i^{-1}(Y)})$, and that the map $R^1 p_{i*} \pi_* \omega_X \to R^1 p_{i*} \tau(\omega_{p_i^{-1}(Y)})$ is surjective. But by choosing a further alteration and using \cite[Lemma 5.2]{BST}, we can show that this map is $0$. 
\end{proof}

\bibstyle{alpha}
\bibliography{matroid.bib}

\end{document}